\numberwithin{equation}{section}
\theoremstyle{plain}
\newtheorem{theorem}{Theorem}[section]
\newtheorem{lemma}[theorem]{Lemma}
\newtheorem{proposition}[theorem]{Proposition}
\newtheorem{corollary}[theorem]{Corollary}
\theoremstyle{definition}
\newtheorem{definition}[theorem]{Definition}
\newtheorem*{remark*}{Remark}
\newtheorem{subsec}[theorem]{}
\newcommand{\calE}{{\mathcal E}}
\newcommand{\calO}{{\mathcal O}}
\newcommand{\CC}{{\mathbb C}}
\newcommand{\NN}{{\mathbb N}}
\newcommand{\QQ}{{\mathbb Q}}
\newcommand{\ZZ}{{\mathbb Z}}
\newcommand{\Xl}{\underline{X}}
\newcommand{\ml}{\underline{m}}
\newcommand{\itil}{\widetilde{i}}
\newcommand{\jtil}{\widetilde{j}}
\newcommand{\ltil}{\widetilde{l}}
\newcommand{\mtil}{\widetilde{m}}
\newcommand{\stil}{\widetilde{s}}
\newcommand{\ttil}{\widetilde{t}}
\newcommand{\Itil}{\widetilde{I}}
\newcommand{\Jtil}{\widetilde{J}}
\newcommand{\qhat}{\widehat{q}}
\newcommand{\Oq}{\calO_q}
\newcommand{\OqMth}{\Oq(M_3(k))}
\newcommand{\OqMn}{\Oq(M_n(k))}
\newcommand{\OqGLth}{\Oq(GL_3(k))}
\newcommand{\OqGLn}{\Oq(GL_n(k))}
\newcommand{\OqSLth}{\Oq(SL_3(k))}
\newcommand{\OqSLn}{\Oq(SL_n(k))}
\newcommand{\kx}{k^\times}
\newcommand{\gnoc}{\mathrel{{\lower.2ex\hbox{$\backsim$}}\llap{\raise.45ex\hbox{=}}}}
\newcommand{\lowunder}[1]{$\underline{\vphantom{p} \hbox{#1}}$}
\newcommand{\Hspec}{H\text{-}\operatorname{spec}}
\newcommand{\SHspec}{SH\text{-}\operatorname{spec}}
\DeclareMathOperator\spec{\operatorname{spec}}
\DeclareMathOperator\prim{\operatorname{prim}}
\DeclareMathOperator\Fract{\operatorname{Fract}}
\DeclareMathOperator\GKdim{\operatorname{GK{.}dim}}
\DeclareMathOperator\calExt{\operatorname{Ext}}
\DeclareMathOperator\gldim{\operatorname{gl{.}dim}}
\newcommand{\edge}{\ar@{-}}
\newcommand{\place}{*{}+0}
\newcommand{\plb}{*{\bullet}+0}
\newcommand{\plc}{*{\circ}+0}
\newcommand{\ccc}{\plc&\plc&\plc}
\newcommand{\bcc}{\plb&\plc&\plc}
\newcommand{\ccb}{\plc&\plc&\plb}
\newcommand{\bbc}{\plb&\plb&\plc}
\newcommand{\bcb}{\plb&\plc&\plb}
\newcommand{\cbb}{\plc&\plb&\plb}
\newcommand{\hrz}{\place \edge[r] &\place}
\newcommand{\hrzvrt}{\place \edge[r] \edge[d] &\place \edge[d]}
\newcommand{\dropvarup}[2]{\save+<0ex,#1ex>\drop{#2}\restore}
\newcommand{\dropup}[1]{\save+<0ex,4ex>\drop{#1}\restore}
\newcommand{\dropgen}[3]{\save+<#1ex,#2ex>\drop{#3}\restore}
\begin{document}

\title[Primitive ideals in quantum $SL_3$ and $GL_3$]
{Primitive ideals in quantum $SL_3$ and $GL_3$}

\author{K.R. Goodearl and T.H. Lenagan}

\address{Goodearl: Department of Mathematics, 
University of California at Santa Barbara,
Santa Barbara, CA 93106, USA}

\email{goodearl@math.ucsb.edu} 

\address{Lenagan: Maxwell Institute for Mathematical Sciences, School of Mathematics, University of Edinburgh, JCMB, King's Buildings, Mayfield Road, Edinburgh EH9 3JZ, Scotland, UK}

\email{tom@maths.ed.ac.uk}

\begin{abstract}
Explicit generating sets are found for all primitive ideals in
the generic quantized coordinate rings of $SL_3$ and $GL_3$ over an arbitrary
algebraically closed field $k$. (Previously, generators were only known up to certain localizations.) These generating sets form polynormal
regular sequences, from which it follows that all primitive factor
algebras of $\OqSLth$ and $\OqGLth$ are Auslander-Gorenstein and
Cohen-Macaulay. 
\end{abstract}

\subjclass[2010]{16T20; 16D60, 16E65, 20G42}

\keywords{Quantum general linear group, quantum special linear group, primitive ideal, Auslander-Gorenstein ring, Cohen-Macaulay ring}

\thanks{This research was partially supported by grants from the NSF
(USA) and by Leverhulme Research Interchange Grant F/00158/X (UK)}

\maketitle

\setcounter{section}{-1}
\section{Introduction} \label{intro}

The primitive ideals of quantum $SL_3$ were first classified by Hodges and Levasseur in the case of $\Oq(SL_3(\CC))$ \cite[Theorems 4.2.2, 4.3.1, 4.4.1 and \S4.5]{HL}. (Here and throughout, we consider only generic quantized coordinate rings, meaning that quantizing parameters such as $q$ are not roots of unity.) This classification was extended to $\Oq(SL_n(\CC))$ in \cite{HLn}, to $\Oq(G)$ for semisimple  groups $G$ and $q$ transcendental in \cite{Jos}, and to multiparameter quantizations $\calO_{q,p}(G)$ over $\CC$ in \cite{HLT}. In these classifications, the primitive ideals appear as pullbacks of maximal ideals from certain localizations, and it is only in the localizations that generating sets are calculated (assuming the base field is algebraically closed). The only case in which generating sets for primitive ideals have been explicitly determined is the easy case of $\Oq(SL_2(\CC))$ \cite[Theorem B.1.1]{HL}. Of course, once the primitive (or prime) ideals of $\OqSLn$ are known in some form, those for $\OqGLn$ can be readily obtained via the isomorphism $\OqGLn\cong \OqSLn[z^{\pm1}]$ observed by Levasseur and Stafford \cite[Proposition]{LS}. 

Our purpose here is to calculate explicit generating sets for all primitive ideals of the (generic) quantized coordinate rings $\OqSLth$ and $\OqGLth$, over any algebraically closed base field $k$. With some care in the choices of generators and the order in which they are listed, we actually obtain generating sets which are polynormal regular sequences, from which we obtain strong homological conclusions: All primitive factor algebras of $\OqSLth$ and $\OqGLth$ are Auslander-Gorenstein and
Cohen-Macaulay (with respect to Gelfand-Kirillov dimension). Further, addressing a question raised in \cite{GZ}, we show that all maximal ideals of $\OqSLth$ and $\OqGLth$ have codimension $1$.

The line of our calculations follows the framework given by stratifications relative to torus actions (see \cite[Theorems II.2.13 and II.8.4]{BG}), which we recall below. It is convenient to work first with $\OqGLth$, since the results for $\OqSLth$ are then immediate corollaries. On the other hand, with appropriate (obvious) modifications, all steps of our calculations can be performed in parallel in $\OqSLth$.

\begin{subsec} \label{stratification} 
{\bf Stratification.} For the remainder of the introduction, set $A:= \OqGLth$ and $H:= (\kx)^6$. There is a standard rational action of $H$ on $A$ by $k$-algebra automorphisms (see \S\ref{subsec1.5}), and we write $\Hspec A$ to denote the set of $H$-prime ideals of $A$. (These coincide with the $H$-stable prime ideals of $A$ by \cite[Proposition II.2.9]{BG}.) The prime and primitive spectra of $A$ are stratified via $H$ as in \cite[Definition II.2.1]{BG}:
$$\spec A = \bigsqcup_{J\in\Hspec A} \spec_JA  \quad\qquad\text{and}\quad\qquad \prim A = \bigsqcup_{J\in\Hspec A} \prim_JA,$$
where the \emph{$H$-strata} $\spec_JA$ and $\prim_JA$ are the sets
\begin{align*}
\spec_J A &:= \{P\in\spec A \mid \bigcap_{h\in H} h(P)=J\}  \\
\prim_J A &:=(\prim A) \cap (\spec_J A).
\end{align*}

Given $J\in \Hspec A$, the strata $\spec_J A$ and $\prim_J A$ have the following structure \cite[Theorem II.2.13, Corollaries II.6.5 and II.8.5]{BG}.
\begin{enumerate}
\item{\it The set $\calE_J$ of all regular $H$-eigenvectors in $A/J$ is a denominator set, and the localization $A_J := (A/J)[\calE_J^{-1}]$ is $H$-simple (with respect to the induced $H$-action).}
\item{\it The center $Z(A_J)$ is a commutative Laurent polynomial ring over the field $k$, in at most $6$ indeterminates.}
\item{\it Localization, contraction and extension provide bijections
\begin{align*}
\spec_J A &\longleftrightarrow \spec A_J \longleftrightarrow \spec Z(A_J) \\
\prim_J A & \longleftrightarrow \max A_J \longleftrightarrow \max Z(A_J).
\end{align*}
In particular, $\prim_J A$ is the set of maximal elements of $\spec_J A$.}
\end{enumerate}

There are exactly analogous results for the algebra $\OqSLth$, relative to a standard action of a torus of rank $5$ (see \S\ref{subsec1.5}).
\end{subsec}

Our route to compute $\prim A$ follows the structure outlined above. We give more detail for the individual steps below.

\begin{subsec} \label{Hspec}
{\bf $H$-prime ideals and generating sets.} The first step is to determine $\Hspec A$. It is known that $A$ has exactly $36$ $H$-prime ideals, induced from those of $\OqMth$ which do not contain the quantum determinant. Explicit generating sets for the $H$-primes of $\OqMth$ were calculated in \cite{GLenJAlg}, and from these we obtain explicit generating sets for the $H$-primes of $A$ (see \S\ref{subsec2.1}). All these generators are quantum minors.

The stratification of $\spec A$ corresponds to an analogous one for $\spec\OqSLth$, which coincides with the partition introduced by Hodges and Levasseur in \cite{HL}. Their partition was indexed by the double Weyl group $S_3\times S_3$, and we carry over their indexing to write 
$$\Hspec A= \{Q_w \mid w\in S_3\times S_3\}$$
(see \S\ref{subsec2.2}). It is convenient to use similar indexing for the $H$-strata of $\spec A$ and $\prim A$. Namely, for $w=(w_+,w_-)$ in $S_3\times S_3$, we set
\begin{align*}
\spec_wA &= \spec_{w_+,w_-} A := \spec_{Q_w} A \\
\prim_wA &= \prim_{w_+,w_-} A := \prim_{Q_w} A.
\end{align*}
\end{subsec}

\begin{subsec} \label{redloc}
{\bf Reduced localizations.} For computational purposes, it is helpful to reduce the localizations $A_J$ by shrinking the denominator sets $\calE_J$ to multiplicative sets consisting of normal elements, provided the reduced localization remains $H$-simple. (Recall that an algebra with an $H$-action is called {\it $H$-simple\/} provided it is nonzero and it has no proper nonzero $H$-stable ideals. These conditions hold, in particular, if the algebra is prime and has no nonzero $H$-prime ideals.) We identify appropriate denominator sets $E_w \subseteq \calE_{Q_w}$ and set $A_w := (A/Q_w)[E_w^{-1}]$ for $w\in S_3\times S_3$ (\S\S\ref{subsec3.2}-\ref{subsec3.3}). The localization $A_w$ satisfies the same properties as $A_{Q_w}$ (\S\ref{subsec3.3}). Namely,
\begin{enumerate}
\item{\it $A_w$ is $H$-simple.}
\item{\it $Z(A_w)$ is a Laurent polynomial ring over $k$ in at most $6$ indeterminates.}
\item{\it Localization, contraction and extension provide bijections}
\begin{align*}
\spec_w A &\longleftrightarrow \spec A_w \longleftrightarrow \spec Z(A_w)  \\
\prim_w A &\longleftrightarrow \max A_w \longleftrightarrow \max Z(A_w).
\end{align*}
\end{enumerate}
\end{subsec}

\begin{subsec} \label{indets}
{\bf Indeterminates.} The next step is to calculate each of the rings $Z(A_w)$, as a Laurent polynomial ring in specific indeterminates. We do this in Lemma \ref{lem4.2}. (The bound of $6$ on the number of indeterminates is not sharp -- as it turns out, each $Z(A_w)$ has Krull dimension at most $3$.)

Once we have expressed $Z(A_w)$ in the form $k[z_1^{\pm1},\dots,z_d^{\pm1}]$, we can identify the primitive ideals in the stratum $\prim_w A$ via \S\ref{redloc}(3), assuming $k$ is algebraically closed. They are exactly the inverse images of the ideals
\begin{equation} \label{locprimitive}
(A/Q_w) \cap \bigl( A_w(z_1-\alpha_1) +\cdots+ A_w(z_d-\alpha_d) \bigr),
\tag{E0.4}  
\end{equation}
for $\alpha_1,\dots,\alpha_d\in \kx$, under the quotient map $A\rightarrow A/Q_w$. However, this description only provides generators up to localization. Hence, one step remains.
\end{subsec} 

\begin{subsec} \label{primgens}
{\bf Primitive generators.}  For $w\in S_3\times S_3$ and $\alpha_1,\dots,\alpha_d\in \kx$, we find elements $a_1,\dots,a_d\in A/Q_w$ which generate a prime ideal of $A/Q_w$, and which generate the same ideal of $A_w$ as $z_1-\alpha_1,\dots,z_d-\alpha_d$ (Lemma \ref{lem5.3} and proof of Theorem \ref{mainthm}). It follows that $a_1,\dots,a_d$ generate the ideal described in \eqref{locprimitive}. Combining coset representatives of the $a_i$ with generators for $Q_w$, finally, we obtain sets of generators for the primitive ideals in $\spec_w A$ (Theorem \ref{mainthm}). 
\end{subsec}


\section{Background and notation} \label{backnot}

Fix a base field $k$ throughout, and a nonzero element $q\in k$ which is  not a root of unity. In our main theorem, we require $k$ to be algebraically closed, but that assumption is not needed for most of the prior results. For this section, also fix an integer $n\ge 2$; later, we specialize to the case $n=3$.

\begin{subsec} \label{subsec1.1} {\bf Generators and relations.} For $n\in\NN$, we present the quantized coordinate ring of the matrix variety $M_n(k)$ as the $k$-algebra $\OqMn$ with generators $X_{ij}$ for $1\le i,j\le n$ and relations
\begin{equation}
\begin{aligned}
 X_{ij}X_{lj} &= qX_{lj}X_{ij}  &\qquad\qquad&(i<l)  \\
X_{ij}X_{im} &= qX_{im}X_{ij}  &&(j<m)  \\
X_{ij}X_{lm} &= X_{lm}X_{ij}  &&(i<l,\; j>m)  \\
X_{ij}X_{lm} - X_{lm}X_{ij} &= \qhat X_{im} X_{lj}  &&(i<l,\; j<m) \,, 
\end{aligned}  \tag{E1.1}
\end{equation}
where $\qhat := q-q^{-1}$.
The {\it quantum determinant\/} in $\OqMn$ is the element
$$D_q := \sum_{\pi\in S_n} (-q)^{\ell(\pi)} X_{1,\pi(1)} X_{2,\pi(2)}
\cdots X_{n,\pi(n)} \,,$$
where $\ell(\pi)$ denotes the {\it length\/} of the permutation
$\pi$, that is, the minimum length of an expression for $\pi$ as a
product of simple transpositions $(i,i+1)$. The element $D_q$ lies in the center of $\OqMn$ (e.g., \cite[Theorem 4.6.1]{PW}). The quantized coordinate rings of $GL_n(k)$ and $SL_n(k)$ are then given as a localization and a quotient of $\OqMn$, respectively:
\begin{align*}
\OqGLn &:= \OqMn[D_q^{-1}] ; &\OqSLn &:=    \OqMn/\langle D_q-1\rangle.
\end{align*}
Let us use $x_{ij}$ to denote the coset of $X_{ij}$ in $\OqSLn$.

Due to the assumption that $q$ is not a root of unity,
\begin{enumerate}
\item {\it All prime ideals of $\OqMn$, $\OqGLn$, and $\OqSLn$ are completely prime}
\end{enumerate}
\cite[Corollary II.6.10]{BG}, meaning that quotients modulo these prime ideals are domains.
\end{subsec}

\begin{subsec} \label{subsec1.2} {\bf Quantum minors.} For any nonempty sets $I,J\subseteq \{1,\dots,n\}$ of the same cardinality, we write $[I|J]$ for the {\it quantum minor\/} with row index set $I$ and column index set $J$ in $\OqMn$, that is, for  the quantum determinant of $\Oq(M_{|I|}(k)$ in the subalgebra $k\langle X_{ij}\mid i\in I,\; j\in J\rangle \subseteq \OqMn$. In detail, if $I = \{i_1<\cdots<i_t\}$ and $J= \{j_1<\cdots<j_t\}$, then
$$[I|J] := \sum_{\pi\in S_t} (-q)^{\ell(\pi)} X_{i_1,j_{\pi(1)}} X_{i_2,j_{\pi(2)}} \cdots X_{i_t,j_{\pi(t)}}\,.$$
The corresponding quantum minor in $\OqSLn$ is obtained by replacing $X_{i,\pi(i)}$ by $x_{i,\pi(i)}$ in the formula above. It is just the coset of $[I|J]$, and we will use the same notation for it. 

We often omit set braces and commas from the notation for quantum minors, writing $[1|3]$ and  $[12|23]$ for $[\{1\}|\{3\}]= X_{13}$ and $[\{1,2\}|\{2,3\}]$, for instance. Complementary index sets will appear in several formulas; we will use the notation 
$$\Itil:= \{1,\dots,n\} \setminus I.$$
Abbreviated set notation will be used here too, as in $\widetilde{23}= \widetilde{\{2,3\}}$.
\end{subsec}

\begin{subsec} \label{subsec1.3} {\bf Quantum Laplace and commutation relations.} Some relations among  quantum minors are needed for our computations; we record them here. We state ones holding in $\OqMn$ (and thus also in $\OqGLn$), and simply note that analogous ones hold in $\OqSLn$.

The {\it quantum Laplace relations\/} say that
\begin{align}
\sum_{j=1}^n (-q)^{j-l} X_{ij}[\ltil|\jtil] &= \delta_{il}D_q  &\sum_{j=1}^n (-q)^{i-j} [\jtil|\itil]X_{jl} &= \delta_{il}D_q  \tag{E1.3a}
\end{align}
for all $i$, $l$ \cite[Corollary 4.4.4]{PW}. There are many commutation relations among quantum minors. Here are seven of the ones from \cite[Lemmas 4.5.1, 5.1.2]{PW}:
\begin{gather*}
\begin{align}
\qquad\qquad X_{ij}[\ltil|\mtil] &= [\ltil|\mtil]X_{ij}  &&\qquad(l\ne i,\; m\ne j)  \tag{E1.3b}
\end{align}\\
\begin{aligned}
X_{ij}[\ltil|\jtil]- q[\ltil|\jtil]X_{ij} &= \qhat\sum_{s<j} (-q)^{s-j}[\ltil|\stil]X_{is}  \\
X_{ij}[\ltil|\jtil]- q^{-1}[\ltil|\jtil]X_{ij} &= -\qhat\sum_{s>j} (-q)^{s-j}[\ltil|\stil]X_{is} \\
\end{aligned}  \tag{E1.3c} \qquad\qquad (l\ne i)\\
\begin{aligned}
X_{ij}[\itil|\mtil]- q[\itil|\mtil]X_{ij} &= \qhat\sum_{s<i} (-q)^{s-i}[\stil|\mtil]X_{sj}  \\
X_{ij}[\itil|\mtil]- q^{-1}[\itil|\mtil]X_{ij} &= -\qhat\sum_{s>i} (-q)^{s-i}[\stil|\mtil]X_{sj}
\end{aligned}  \tag{E1.3d}  \qquad\qquad(m\ne j)\\
\begin{aligned}
X_{ij}[\itil|\jtil]- [\itil|\jtil]X_{ij} &= q\qhat \biggl( \sum_{s<i} (-q)^{s-i} X_{sj}[\stil|\jtil] -\sum_{t>j} (-q)^{j-t} [\itil|\ttil]X_{it} \biggr)  \\
X_{ij}[\itil|\jtil]- [\itil|\jtil]X_{ij} &= q^{-1}\qhat \biggl( \sum_{t<j} (-q)^{j-t} [\itil|\ttil]X_{it} - \sum_{s>i} (-q)^{s-i} X_{sj}[\stil|\jtil] \biggr) \,.  
\end{aligned}  \tag{E1.3e}
\end{gather*}
Finally, we give some commutation relations among $(n-1)\times(n-1)$ quantum minors \cite[Theorem 5.2.1]{PW}:
\begin{align}
[\itil|\jtil][\itil|\mtil] &= q^{-1}[\itil|\mtil][\itil|\jtil]  &&(j<m)  \tag{E1.3f}\\
[\itil|\jtil][\ltil|\jtil] &= q^{-1}[\ltil|\jtil][\itil|\jtil]  &&(i<l)  \tag{E1.3g}\\
[\itil|\jtil][\ltil|\mtil] &= [\ltil|\mtil][\itil|\jtil]  &&(i<l,\; j>m)  \tag{E1.3h}\\
[\itil|\jtil][\ltil|\mtil] -[\ltil|\mtil][\itil|\jtil] &= \qhat [\itil|\mtil][\ltil|\jtil]  &&(i<l,\; j<m) \,.  \tag{E1.3i}
\end{align}
\end{subsec}

\begin{subsec} \label{subsec1.4} {\bf Symmetry.} The algebras $\OqMn$, $\OqGLn$, and $\OqSLn$ enjoy a number of symmetries, in the form of automorphisms and anti-automorphisms. We single out three. First, there is the {\it transpose automorphism\/} $\tau$ on $\OqMn$, which is a $k$-algebra automorphism such that $\tau(X_{ij})= X_{ji}$ for all $i$, $j$ \cite[Proposition 3.7.1(1)]{PW}. This automorphism also transposes rows and columns in quantum minors:
$$\tau\bigl( [I|J] \bigr)= [J|I]$$
for all $I$, $J$ \cite[Lemma 4.3.1]{PW}. In particular, $\tau(D_q)=D_q$, and so $\tau$ induces automorphisms of $\OqGLn$ and $\OqSLn$, which we also denote $\tau$.

Let $S$ denote the antipode of the Hopf algebra $\OqGLn$; this is a $k$-algebra anti-automorphism such that $S(X_{ij})= (-q)^{i-j} [\jtil|\itil]D_q^{-1}$ for all $i$, $j$ (see \cite[Theorem 5.3.2]{PW}, with $q$ and $q^{-1}$ interchanged). The action of $S$ on quantum minors is given by
$$S\bigl( [I|J] \bigr)= (-q)^{\Sigma I- \Sigma J} [\Jtil|\Itil] D_q^{-1}$$
for all $I$, $J$ \cite[Lemma 4.1]{KLR}. Since the antipode of $\OqSLn$, which we also denote by $S$, is induced by the antipode of $\OqGLn$, we have $S(x_{ij})= (-q)^{i-j} [\jtil|\itil]$ for all $i$, $j$, and the displayed formula becomes $S\bigl( [I|J] \bigr)= (-q)^{\Sigma I- \Sigma J} [\Jtil|\Itil]$ in $\OqSLn$.

From \cite[Proposition 3.7.1(3)]{PW}, there is a $k$-algebra anti-automorphism $\rho$ of $\OqMn$ sending each $X_{ij}$ to $X_{n+1-j,n+1-i}$. This sends $D_q$ to itself \cite[Lemma 4.2.3]{PW}, so $\rho$ induces anti-automorphisms of $\OqGLn$ and $\OqSLn$. Further, by \cite[Lemma 4.3.1]{PW}, 
$$\rho\bigl( [I|J] \bigr)= [w_0(J)|w_0(I)]$$
for all quantum minors $[I|J]$ in $\OqMn$, where $w_0 := \left[ \begin{smallmatrix} 1&2&\cdots&n\\ n&n-1&\cdots&1 \end{smallmatrix} \right]$ is the longest element of the symmetric group $S_n$.
\end{subsec}

\begin{subsec} \label{subsec1.5} {\bf Torus actions.} Write $H$ (or $H_n$, if it is necessary to specify $n$) for the algebraic torus $(\kx)^{2n}$, and let $H$ act on $\OqMn$ and $\OqGLn$ in the standard way, namely by $k$-algebra automorphisms such that
$$(\alpha_1,\dots,\alpha_n,\beta_1,\dots,\beta_n).X_{ij}= \alpha_i\beta_j X_{ij}$$
for all $i$, $j$. Then define the subgroup
$$SH := \{(\alpha_1,\dots,\alpha_n,\beta_1,\dots,\beta_n) \in H \mid \alpha_1\alpha_2\cdots\alpha_n\beta_1\beta_2\cdots \beta_n=1 \}$$
of $H$. Since $SH$ fixes $D_q-1$, it induces an action (by $k$-algebra automorphisms) on $\OqSLn$. The actions of $H$ on $\OqMn$ and $\OqGLn$, and the action of $SH$ on $\OqSLn$, are rational. All quantum minors in $\OqMn$ (respectively, $\OqSLn$) are $H$-eigenvectors (respectively, $SH$-eigenvectors).

Although the transpose automorphism $\tau$ of $\OqMn$ is not $H$-equivariant, it does satisfy
$$\tau \bigl( (\alpha_1,\dots,\alpha_n,\beta_1,\dots,\beta_n).Y \bigr)= (\beta_1,\dots,\beta_n,\alpha_1,\dots,\alpha_n).\tau(Y)$$
for all $(\alpha_1,\dots,\alpha_n,\beta_1,\dots,\beta_n) \in H$ and $Y\in \OqMn$. (To see this, just check the displayed identity for $Y=X_{ij}$.) Consequently, $\tau$ maps $H$-stable subsets of $\OqMn$ to $H$-stable subsets, and likewise in $\OqGLn$. Similarly, $\tau$ maps $SH$-stable subsets of $\OqSLn$ to $SH$-stable subsets.

As for the $S$ and $\rho$, we have
\begin{align*}
S \bigl( (\alpha_1,\dots,\alpha_n,\beta_1,\dots,\beta_n).Y \bigr) &= (\beta_1^{-1},\dots,\beta_n^{-1},\alpha_1^{-1},\dots,\alpha_n^{-1}).S(Y)  \\
\rho\bigl( (\alpha_1,\dots,\alpha_n,\beta_1,\dots,\beta_n).Y \bigr) &= (\beta_n,\dots,\beta_1,\alpha_n,\dots,\alpha_1).\rho(Y)
\end{align*}
for all $(\alpha_1,\dots,\alpha_n,\beta_1,\dots,\beta_n) \in H$ and $Y\in \OqGLn$. Consequently, $S$ and $\rho$ send $H$-stable subsets of $\OqGLn$ to $H$-stable subsets, and similarly, $SH$-stable subsets of $\OqSLn$ are mapped to $SH$-stable subsets.

The sets of torus-invariant prime ideals in the algebras under discussion will be denoted
\begin{align*}
\Hspec \OqMn; &&\Hspec \OqGLn; &&\SHspec \OqSLn.
\end{align*}
Note that all three collections are stable under $\tau$ and $\rho$, and that the latter two are stable under $S$.
Recall from \cite[Proposition II.2.9]{BG} that the $H$-prime ideals of $\OqMn$ or $\OqGLn$ coincide with the prime $H$-stable ideals, and that the $SH$-prime ideals of $\OqSLn$ coincide with the prime $SH$-stable ideals. 
\end{subsec}


\section{Torus-invariant prime ideals} \label{Hprimes}

For most of this section, we concentrate on the case $n=3$ and take $A := \OqGLth$.

\begin{subsec} \label{subsec2.1} {\bf Generators.} Generating sets for all $H$-prime ideals in $\OqMth$ were determined in \cite{GLenJAlg}. For the $36$ $H$-primes not containing the quantum determinant, these generating sets are encoded in \cite[Figure 6]{GLenJAlg}, a version of which we give in Figure \ref{Hprimegens} below. Since all prime ideals of $A$ are induced from prime ideals of $\OqMth$, Figure \ref{Hprimegens} also gives generating sets for all the $H$-prime ideals of $A$.

The positions in Figure \ref{Hprimegens} are indexed by pairs $(w_+,w_-)$ of permutations from $S_3$; we will explain this indexing in \S\ref{subsec2.2} below. Each of the small $3\times 3$ diagrams within the figure represents a selection of $1\times 1$ and $2\times 2$ quantum minors, given in positions relative to the $3\times 3$ matrix $(X_{ij})$ of canonical generators for $\OqMth$. Circles $(\circ)$ are placeholders, bullets $(\bullet)$ stand for $1\times 1$ quantum minors (i.e., generators $X_{ij}$), and squares $(\square)$ stand for $2\times 2$ quantum minors. For example, the diagram in position $(321,321)$ records an empty set of generators, while the diagram in position $(231,231)$ records the generators $[12|23]$ and $X_{31}$.
\end{subsec}

\begin{figure} [h]
 $$\xymatrixrowsep{0.1pc}\xymatrixcolsep{0.1pc}
\xymatrix{
 &&&&\ar@{=}[28,0] \\
 &&&\dropgen{-0.5}0{w_-} \\  
 &&& &&&\dropvarup0{321} &&&&\dropvarup0{231} &&&&\dropvarup0{312} &&&&\dropvarup0{132} &&&&\dropvarup0{213} &&&&\dropvarup0{123} \\
 &\dropvarup0{w_+} \\
\ar@{=}[0,28] &&&&& &&&&& &&&&& &&&&& &&&&& &&&\dropup{} \\
 &&& &&\ccc &&\plc &\hrzvrt &&\ccb &&\cbb &&\ccb &&\cbb \\
 &&\dropvarup0{321} & &&\ccc &&\plc &\hrz &&\ccc &&\ccc
&&\ccb &&\ccb \\
 &&& &&\ccc &&\ccc &&\ccc &&\ccc &&\ccc &&\ccc \\   \\
 &&& &&\ccc &&\plc &\hrzvrt &&\ccb &&\cbb &&\ccb &&\cbb \\
 &&\dropvarup0{231} & &&\ccc &&\plc &\hrz &&\ccc &&\ccc
&&\ccb &&\ccb \\
 &&& &&\bcc &&\bcc &&\bcc &&\bcc &&\bcc &&\bcc \\   \\
 &&& &&\ccc &&\plc &\place \edge[dd] \edge[r] &\place \edge[d] &&\ccb &&\cbb &&\ccb &&\cbb \\
 &&\dropvarup0{312} & &&\hrzvrt &\plc &&\place \edge[rr] \edge[d] &&\place
 &&\hrzvrt &\plc &&\hrzvrt &\plc &&\hrzvrt &\plb &&\hrzvrt &\plb \\
 &&& &&\hrz &\plc &&\hrz &\plc &&\hrz &\plc &&\hrz &\plc
 &&\hrz &\plc &&\hrz &\plc \\   \\
 &&& &&\ccc &&\plc &\hrzvrt &&\ccb &&\cbb &&\ccb &&\cbb \\
 &&\dropvarup0{132} & &&\bcc &&\plb &\hrz &&\bcc &&\bcc
&&\bcb &&\bcb \\
 &&& &&\bcc &&\bcc &&\bcc &&\bcc &&\bcc &&\bcc \\   \\
 &&& &&\ccc &&\plc &\hrzvrt &&\ccb &&\cbb &&\ccb &&\cbb \\
 &&\dropvarup0{213} & &&\ccc &&\plc &\hrz &&\ccc &&\ccc
&&\ccb &&\ccb \\
 &&& &&\bbc &&\bbc &&\bbc &&\bbc &&\bbc &&\bbc \\   \\
 &&& &&\ccc &&\plc &\hrzvrt &&\ccb &&\cbb &&\ccb &&\cbb \\
 &&\dropvarup0{123} & &&\bcc &&\plb &\hrz &&\bcc &&\bcc
&&\bcb &&\bcb \\
 &&& &&\bbc &&\bbc &&\bbc &&\bbc &&\bbc &&\bbc \\   
 &&&&&
}$$
\caption{Generators for $H$-prime ideals}   \label{Hprimegens}
\end{figure}

\begin{subsec} \label{subsec2.2} {\bf Indexing.} The indexing in Figure \ref{Hprimegens} is adapted from that used by Hodges and Levasseur in \cite[\S2.2]{HL} for certain key ideals $I_w= I_{w_+,w_-}$ generated by quantum minors in $\OqSLth$. In fact, one can show that the $I_w$ are exactly the $SH$-prime ideals of $\OqSLth$. Some of the quantum minors that belong to $I_w$ do not appear among the generators given in \cite{HL}, but we find it useful to include them. Thus, the sets of quantum minors appearing in Figure \ref{Hprimegens} are slightly larger than the sets used to define the ideals $I_w$. To give an explicit formula for our generating sets, it is convenient to recall the following relations among index sets.

Let $I$ and $J$ be finite sets of positive integers with the same cardinality, say $t$. Write the elements of these sets in ascending order:
\begin{align*}
I &= \{i_1<\cdots<i_t\}  &J &= \{j_1<\cdots<j_t\}.
\end{align*}
Then define $I\le J$ if and only if $i_l\le j_l$ for all $l=1,\dots,t$. 

For $y\in S_3$, define the following ideals of $A$:
\begin{align*}
Q^+_y &:= \bigl\langle [i|1] \bigm| i\nleq y(1)\bigr\rangle +\bigl\langle [i|2] \bigm| i\nleq y(1),y(2)\bigr\rangle + \bigl\langle [I|12] \bigm| I\nleq y(12) \bigr\rangle \\
Q^-_y &:= \bigl\langle [\itil|\widetilde{1}] \bigm| i\nleq y(1) \bigr\rangle+ \bigl\langle [\Itil|\widetilde{12}] \bigm| I \nleq y(12) \bigr\rangle+ \bigl\langle [\Itil|\widetilde{13}] \bigm| I \nleq y(12),y(13) \bigr\rangle,
\end{align*}
where, following our previous conventions, $y(12)= y(\{1,2\})$.
Finally, for $w=(w_+,w_-)$ in $S_3\times S_3$, set
$$Q_w= Q_{w_+,w_-} := Q^+_{w_+}+ Q^-_{w_-}\,.$$
The diagram in position $w$ of Figure \ref{Hprimegens} gives a set of quantum minors that generate $Q_w$, where we label a permutation $y$ by its effect on $1,2,3$. In other words, if $y$ is given in input-output form by the matrix $\left[\smallmatrix 1&2&3\\ y(1)&y(2)&y(3) \endsmallmatrix\right]$, we just record the bottom row of this matrix. Note that generating sets for the ideals $Q^+_{w_+}= Q_{w_+,321}$ are given in the left column of Figure \ref{Hprimegens}, and those for $Q^-_{w_-}= Q_{321,w_-}$ in the first row.
\end{subsec}

\begin{subsec} \label{symmHprimes} {\bf Symmetry.}
By inspection of the generating sets in Figure \ref{Hprimegens}, we see that
\begin{align*}
\tau\bigl( Q^+_y \bigr) &= Q^-_{y^{-1}}  &\tau\bigl( Q^-_y \bigr) &= Q^+_{y^{-1}}  
\end{align*}
for all $y\in S_3$. Consequently,
\begin{equation}
\tau(Q_{w_+,w_-})= Q_{w_-^{-1},w_+^{-1}}
\tag{E2.3a}
\end{equation}
for $w_+,w_-\in S_3$, and therefore $\tau$ induces isomorphisms
\begin{equation} \label{tauquo}
A/Q_{w_+,w_-} \;{\overset\tau\longrightarrow}\; A/Q_{w_-^{-1},w_+^{-1}}\,.  \tag{E2.3b}
\end{equation}

Further inspection reveals relations such as $S(Q^+_{132}) \subseteq Q^+_{132}$ in $A$. Since $S$ sends $H$-primes of $A$ to $H$-primes and preserves proper inclusions, we obtain
\begin{align*}
S\bigl( Q^+_y \bigr) &= Q^+_{y^{-1}}  &S\bigl( Q^-_y \bigr) &= Q^-_{y^{-1}}
\end{align*}
for $y\in S_3$. Thus,
\begin{equation}
S(Q_{w_+,w_-})= Q_{w_+^{-1},w_-^{-1}}    \tag{E2.3c}
\end{equation}
for $w_+,w_- \in S_3$, and therefore $S$ induces anti-isomorphisms
\begin{equation} \label{Squo}
A/Q_{w_+,w_-} \ {\overset{S}\longrightarrow}\  A/Q_{w_+^{-1},w_-^{-1}}\,.    \tag{E2.3d}
\end{equation}

Finally, we find that
\begin{align*}
\rho(Q^+_y) &= Q^+_{w_0y^{-1}w_0}  &\rho(Q^-_y) &= Q^-_{w_0y^{-1}w_0}
\end{align*}
for $y\in S_3$, and thus
\begin{equation}
\rho(Q_{w_+,w_-})= Q_{w_0w_+^{-1}w_0,w_0w_-^{-1}w_0}    \tag{E2.3e}
\end{equation}
for $w_+,w_- \in S_3$. Hence, $\rho$ and $\rho\tau$ induce anti-isomorphisms 
\begin{equation} \label{rhoquo}
\begin{aligned}
A/Q_{w_+,w_-} \ &{\overset{\rho}\longrightarrow}\  A/Q_{w_0w_+^{-1}w_0,w_0w_-^{-1}w_0}   \\
A/Q_{w_+,w_-} \ &{\overset{\rho\tau}\longrightarrow}\  A/Q_{w_0w_-w_0,w_0w_+w_0}  
\end{aligned}  \tag{E2.3f}
\end{equation}
for all $w_+,w_- \in S_3$.

For convenience in identifying the (anti-) isomorphisms \eqref{tauquo}, \eqref{Squo}, \eqref{rhoquo}, we list the permutations $y^{-1}$, $w_0y^{-1}w_0$, and $w_0yw_0$, for $y\in S_3$, in Figure \ref{useful}.
\begin{figure}[h]
$$\begin{matrix}
y  &\qquad &y^{-1} &\qquad &w_0y^{-1}w_0 &\qquad &w_0yw_0 \\  \\
321&&321&&321&&321\\
231&&312&&231&&312\\
312&&231&&312&&231\\
132&&132&&213&&213\\
213&&213&&132&&132\\
123&&123&&123&&123\\
\end{matrix}$$
\caption{}   \label{useful}
\end{figure}  
\end{subsec}

\begin{subsec} \label{subsec2.3} {\bf Normal elements.} The relations (E1.1) and (E1.3b--e) imply that various quantum minors are normal in $\OqMth$ and $A$, or become normal modulo certain ideals. In particular:
\begin{enumerate}
\item {\it $X_{13}$, $X_{31}$, $[12|23]$, and $[23|12]$ are normal.
\item $X_{12}$ and $X_{23}$ are normal modulo $\langle X_{13}\rangle$.
\item $X_{21}$ and $X_{32}$ are normal modulo $\langle X_{31}\rangle$.}
\end{enumerate}
Inspection of Figure \ref{Hprimegens} immediately reveals that for each $H$-prime ideal $Q$ of $A$, the given generators can be listed in a sequence $a_1,\dots,a_t$ such that $a_1$ is normal and $a_i$, for $i>1$, is normal modulo $\langle a_1,\dots,a_{i-1}\rangle$. Thus, we have a {\it polynormal sequence\/} of generators. Further, Figure \ref{Hprimegens} shows that each of the ideals $\langle a_1,\dots,a_{i-1}\rangle$ is ($H$-) prime, and so $a_i$ is regular modulo $\langle a_1,\dots,a_{i-1}\rangle$. Hence, our list of generators is also a {\it regular sequence\/}. To summarize:
\begin{enumerate}
\item[(4)] {\it Each $H$-prime ideal of $A$ has a polynormal regular sequence of generators.}
\end{enumerate}
\end{subsec}

We now turn to the $SH$-prime ideals of $\OqSLth$, and show that they are exactly the push-forwards of the $H$-prime ideals of $\OqGLth$ with respect to the quotient map. This holds for $\OqSLn$ for arbitrary $n$, and we record the result in that generality. Let us write $\pi: \OqMn\rightarrow \OqSLn$ for the quotient map and also for the natural extension of this map to $\OqGLn$.

The following proposition, in the case when $k=\CC$ and $q$ is transcendental over $\QQ$, is a corollary of \cite[Lemme 3.4.10]{Lau}.

\begin{proposition} \label{prop2.4} The set map $P\mapsto \pi(P)$ provides a bijection of $\Hspec \OqGLn$ onto $\SHspec \OqSLn$.
\end{proposition}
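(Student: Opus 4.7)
The plan is to promote $\pi$ to an identification of $R := \OqGLn$ with a central Laurent-polynomial ring over $\OqSLn$, compatibly with the torus actions, after which the claimed bijection becomes graded-ideal bookkeeping. The key preliminary step is to split the character $\chi\colon H\to\kx$, $(\alpha_1,\dots,\alpha_n,\beta_1,\dots,\beta_n)\mapsto \alpha_1\cdots\alpha_n\beta_1\cdots\beta_n$, that defines $SH$. The one-parameter subgroup $\chi^\vee\colon \kx\to H$ given by $\lambda\mapsto(\lambda,1,\dots,1,1,\dots,1)$ satisfies $\chi\circ\chi^\vee=\mathrm{id}_{\kx}$, so $H = SH \times \chi^\vee(\kx)$ as an internal direct product of algebraic groups.

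I would then use the rational $\chi^\vee(\kx)$-action to give $R$ a $\ZZ$-grading $R = \bigoplus_{n\in\ZZ} R_n$. Each monomial summand of $D_q$ contains exactly one factor $X_{1,j}$, so $D_q \in R_1$; since $D_q$ is a central unit, multiplication by $D_q^n$ gives $R_0$-bimodule isomorphisms $R_0\xrightarrow{\sim}R_n$. Thus $R\cong R_0[D_q,D_q^{-1}]$ with $D_q$ a central indeterminate over $R_0$. Because $\ker\pi = (D_q-1)R$, the evaluation $D_q\mapsto 1$ restricts to an algebra isomorphism $\pi|_{R_0}\colon R_0\xrightarrow{\sim}\OqSLn$. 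Using this identification, the $H$-action decomposes as follows: $SH$ fixes $D_q$ and restricts to the standard $SH$-action on $R_0\cong\OqSLn$, while $\chi^\vee(\kx)$ fixes $R_0$ pointwise and scales $D_q$.

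With this structure in hand, the bijection is formal. Any $H$-stable ideal $P\subseteq R$ is $\chi^\vee$-stable, hence $\ZZ$-graded; the identity $R_n = R_0\cdot D_q^n$ then forces $P = (P\cap R_0)[D_q^{\pm 1}]$, with $P\cap R_0$ an $SH$-stable ideal of $R_0$, and $\pi(P)=P\cap R_0$ since $\pi$ is evaluation at $D_q=1$. Conversely, for any $SH$-stable ideal $J\subseteq R_0$, the extension $J[D_q^{\pm 1}]$ is $H$-stable with $J[D_q^{\pm 1}]\cap R_0 = J$. Primeness transfers across in both directions via the identifications $R/J[D_q^{\pm 1}]\cong(R_0/J)[D_q^{\pm 1}]$ and $R_0/(P\cap R_0)\hookrightarrow R/P$, using \S\ref{subsec1.1}(1) (all primes are completely prime, so their Laurent-polynomial extensions remain domains). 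Hence $P\mapsto\pi(P)=P\cap R_0$ yields the desired bijection $\Hspec\OqGLn\leftrightarrow\SHspec\OqSLn$, with inverse $J\mapsto J[D_q^{\pm 1}]$.

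The only nontrivial step is the structural decomposition $R\cong R_0[D_q^{\pm 1}]$ together with the identification of $R_0$ with $\OqSLn$; everything else is bookkeeping. Since the argument requires nothing beyond the standing hypothesis that $q$ is not a root of unity, it proves the proposition for all $n\ge 2$ over any field $k$, recovering \cite[Lemme 3.4.10]{Lau} as the $k=\CC$, $q$ transcendental case.
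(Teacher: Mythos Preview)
Your proof is correct and rests on the same structural fact as the paper's: $\OqGLn$ is a central Laurent polynomial ring in one variable over $\OqSLn$, compatibly with the torus actions, after which the bijection is graded-ideal bookkeeping. The paper imports this identification from Levasseur--Stafford \cite{LS} (with an auxiliary indeterminate $z$) and the resulting $\SHspec\leftrightarrow\Hspec$ bijection from \cite[Lemma II.5.16]{BG}, and then must check that the cited bijection agrees with $P\mapsto\pi(P)$; this last step is a computation on $H$-eigenvectors using $h=(q,1,\dots,1)$ and the non-root-of-unity hypothesis. You instead split $H=SH\times\chi^\vee(\kx)$, take $D_q$ itself as the Laurent variable, and identify the degree-zero piece with $\OqSLn$ via $\pi|_{R_0}$, so the compatibility with $\pi$ is built in and no external citations are needed. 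Your invocation of \S\ref{subsec1.1}(1) for the primeness transfer is a little heavier than necessary---a central Laurent extension of a prime ring is prime without any domain hypothesis---but the argument as written is valid.
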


\begin{proof} Set $A= \OqGLn$ and $B=\OqSLn$, and let $B[z^{\pm1}]$ be a Laurent polynomial ring over $B$. By \cite[Proposition]{LS}, there is a $k$-algebra isomorphism $\xi:A\rightarrow B[z^{\pm1}]$ such that $\xi(X_{1j})= zx_{1j}$ for all $j$ while $\xi(X_{ij})= x_{ij}$ for all $i\ge2$ and all $j$. As shown in \cite[Lemma II.5.16]{BG}, there is a bijection $\SHspec B \rightarrow \Hspec A$ given by the rule $Q\mapsto \xi^{-1}(Q[z^{\pm1}])$. Hence, we need only show that $\xi(P)= \pi(P)[z^{\pm1}]$ for all $P\in \Hspec A$.

Any $P\in \Hspec A$ is generated by $P\cap \OqMn$, so it is generated by the $H$-eigen\-vectors in $P\cap \OqMn$. Thus, it will suffice to show that for any $H$-eigenvector $a\in \OqMn$, there is a unit $u\in B[z^{\pm1}]$ such that $\xi(a)= u\pi(a)$.

Recall that $\OqMn$ has a $K$-basis consisting of the lexicographically ordered monomials in the generators $X_{ij}$. Let us write such monomials in the form 
$$\Xl^{\ml}= X_{11}^{m_{11}} X_{12}^{m_{12}} \cdots X_{nn}^{m_{nn}},$$
where $\ml= (m_{11},m_{12},\dots,m_{nn}) \in \ZZ_{\ge0}^{n^2}$. Consider an $H$-eigenvector 
$$a= \sum_{s=1}^t \lambda_s \Xl^{\ml_s} \in \OqMn,$$
where the $\ml_s$ are distinct elements of $\ZZ_{\ge0}^{n^2}$ and $\lambda_s\in\kx$. If $h=(q,1,1,\dots,1)\in H$, then
\begin{align*}
h.a &= \sum_{s=1}^t q^{r_s} \lambda_s \Xl^{\ml_s} \in \OqMn,
&r_s &= \sum_{j=1}^n (\ml_s)_{1j}.
\end{align*}
Since $a$ is an $H$-eigenvector and $q$ is not a root of unity, $r_s=r_1$ for all $s$. Consequently,
$$\xi(a)= \sum_{s=1}^t z^{r_s} \lambda_s \Xl^{\ml_s} =z^{r_1}a,$$
as desired.
\end{proof}

\begin{corollary} \label{cor2.5} The set map $P\mapsto \pi(P)$ provides a bijection
$$\{ P\in \Hspec \OqMn \mid D_q\notin P\} \longrightarrow \SHspec \OqSLn.$$
\end{corollary}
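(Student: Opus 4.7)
The plan is to derive this as a routine composition of Proposition \ref{prop2.4} with the standard bijection between prime ideals of $\OqMn$ that miss $D_q$ and prime ideals of the localization $\OqGLn = \OqMn[D_q^{-1}]$. Since $D_q$ is central in $\OqMn$ and is itself an $H$-eigenvector, extension $P \mapsto P[D_q^{-1}]$ and contraction $P' \mapsto P' \cap \OqMn$ preserve $H$-stability, so the localization bijection restricts to a bijection
\[
\{P \in \Hspec \OqMn \mid D_q \notin P\} \longleftrightarrow \Hspec \OqGLn.
\]

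The key observation is that the extension $\pi \colon \OqGLn \to \OqSLn$ of the quotient map, used in Proposition \ref{prop2.4}, sends $D_q$ to $1$. Hence, for any $P \in \Hspec \OqMn$ with $D_q \notin P$, the ideal $\pi(P[D_q^{-1}])$ of $\OqSLn$ coincides with $\pi(P)$, the image of $P$ under the original quotient map $\OqMn \to \OqSLn$. Consequently, the composition
\[
\{P \in \Hspec \OqMn \mid D_q \notin P\} \;\overset{\text{extend}}{\longrightarrow}\; \Hspec \OqGLn \;\overset{\pi}{\longrightarrow}\; \SHspec \OqSLn
\]
is exactly the map $P \mapsto \pi(P)$ stated in the corollary.

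Bijectivity of this composition then follows because each arrow is a bijection: the first by the localization argument above, the second by Proposition \ref{prop2.4}. For bookkeeping, I would spell out the inverse explicitly: given $Q \in \SHspec \OqSLn$, Proposition \ref{prop2.4} produces a unique $P' \in \Hspec \OqGLn$ with $\pi(P') = Q$, and then $P := P' \cap \OqMn$ is the unique $H$-prime of $\OqMn$ with $D_q \notin P$ and $\pi(P) = Q$. I do not anticipate any genuine obstacle here; the only point that warrants a line of verification is the identity $\pi(P[D_q^{-1}]) = \pi(P)$, which is immediate from $\pi(D_q) = 1$.
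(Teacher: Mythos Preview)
Your proof is correct and follows exactly the same approach as the paper: compose the localization bijection $\{P\in \Hspec \OqMn \mid D_q\notin P\} \to \Hspec \OqGLn$ with the bijection of Proposition~\ref{prop2.4}. The paper states this in a single sentence, while you have spelled out the verification that the composite really is $P\mapsto \pi(P)$; there is nothing to add.
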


\begin{proof} Compose the bijection of Proposition \ref{prop2.4} with the bijection
$$\{ P\in \Hspec \OqMn \mid D_q\notin P\} \longrightarrow \Hspec \OqGLn$$
obtained from localization.
\end{proof}

\begin{subsec} \label{subsec2.5} {\bf Generators and normal elements in $\OqSLth$.}
In view of Corollary \ref{cor2.5}, there are exactly $36$ $SH$-prime ideals in $\OqSLth$, also with generating sets encoded in Figure \ref{Hprimegens}. (Here, of course, bullets stand for generators $x_{ij}$.) As in \S\ref{subsec2.3}, 
\begin{enumerate}
\item {\it Each $SH$-prime ideal of $\OqSLth$ has a polynormal regular sequence of generators.}
\end{enumerate}
\end{subsec}

Combining \S\ref{subsec2.3}(4) and \S\ref{subsec2.5}(1) with Theorem \ref{modnormal} yields the following homological information.

\begin{theorem} \label{homolmodHprimes} {\rm (a)} If $P$ is any $H$-prime ideal of $\OqGLth$, then $\OqGLth/P$ is Aus\-lander-Gorenstein and GK-Cohen-Macaulay.

{\rm (b)} If $P$ is any $SH$-prime ideal of $\OqSLth$, then $\OqSLth/P$ is Aus\-land\-er-Gor\-en\-stein and GK-Cohen-Macaulay.
\qed\end{theorem}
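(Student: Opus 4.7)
The plan is a direct application of Theorem \ref{modnormal}, which I take to assert that quotienting a Noetherian algebra that is Auslander-Gorenstein and GK-Cohen-Macaulay by a polynormal regular sequence preserves both properties (with GK dimension dropping by the length of the sequence). Granting that, the two items §\ref{subsec2.3}(4) and §\ref{subsec2.5}(1) already constructed in the paper supply exactly the input needed for (a) and (b) respectively, so the proof reduces to verifying the homological hypotheses on the ambient algebras and then invoking Theorem \ref{modnormal}.

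First I would check that $\OqGLth$ and $\OqSLth$ themselves are Auslander-Gorenstein and GK-Cohen-Macaulay. Since $\OqMth$ is an iterated skew-polynomial extension of $k$ with respect to the standard lexicographic ordering of the $X_{ij}$, it is Auslander-regular and GK-Cohen-Macaulay (a standard computation, due to work cited in \cite{BG}); in particular it is Auslander-Gorenstein. These properties pass to the Ore localization $\OqGLth=\OqMth[D_q^{-1}]$ at the powers of the central regular element $D_q$, and to the factor $\OqSLth=\OqMth/\langle D_q-1\rangle$ by the central regular element $D_q-1$.

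Next, for (a), let $P$ be any $H$-prime of $\OqGLth$. By §\ref{subsec2.3}(4) there is a sequence $a_1,\dots,a_t$ of generators of $P$ which is both polynormal and regular. Iterated application of Theorem \ref{modnormal} (one generator at a time, at each stage using that $\langle a_1,\dots,a_{i-1}\rangle$ is already $H$-prime, so the quotient is a domain on which $a_i$ acts normally and is a non-zero-divisor) then yields that $\OqGLth/P$ is Auslander-Gorenstein and GK-Cohen-Macaulay. The argument for (b) is word-for-word the same, with §\ref{subsec2.5}(1) in place of §\ref{subsec2.3}(4) and $\OqSLth$ in place of $\OqGLth$.

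There is no real obstacle left at this stage; the conclusion is essentially a black-box deduction from Theorem \ref{modnormal}. The substantive work has already been carried out upstream, namely in arranging the quantum-minor generators of each $Q_w$ into a polynormal regular sequence (via the careful labelling encoded in Figure \ref{Hprimegens}) and in establishing the general modular-normalization theorem. What remains here is purely a matter of assembling these ingredients.
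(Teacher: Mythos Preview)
Your proposal is correct and follows essentially the same approach as the paper: the theorem is stated with a \qed and is prefaced only by the sentence ``Combining \S\ref{subsec2.3}(4) and \S\ref{subsec2.5}(1) with Theorem \ref{modnormal} yields the following homological information.'' You have simply made explicit the implicit step that the ambient algebras $\OqGLth$ and $\OqSLth$ are themselves Auslander-regular and GK-Cohen-Macaulay (which the paper cites later, in the proof of Theorem~\ref{thmXX}, as \cite[Proposition I.9.12]{BG}), and then iterated Theorem~\ref{modnormal} along the polynormal regular sequence.
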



\section{Localizations} \label{localizations}

For the remainder of the paper, we take $n=3$, and set $A:= \OqGLth$. Our next step is to identify suitable normal elements with which to build the localizations $A_w$ of \S\ref{redloc}.

\begin{subsec} \label{subsec3.2} {\bf Normal $H$-eigenvectors in factors modulo $H$-primes.} Let $y\in S_3$. Hodges and Levasseur identified certain normal $H$-eigenvectors, labelled $c^{\pm}_{i,y}$, in their factor algebras $\OqSLth/I^{\pm}_y$ \cite[Theorem 2.2.1]{HL}. The corresponding information in our case may be stated as follows:
\begin{enumerate}
\item {\it $[y(1)|1]$ and $[y(12)|12]$ are normal modulo $Q^+_y$.}
\item {\it $[\widetilde{y(1)}|\widetilde{1}]$ and $[\widetilde{y(12)}|\widetilde{12}]$ are normal modulo $Q^-_y$.}
\end{enumerate}
In some cases, the $2\times2$ quantum minors given in (1) or (2) decompose into a product modulo $Q^+_y$ or $Q^-_y$. For example, $[23|12] \equiv X_{21}X_{32}$ modulo $Q^+_{231}$. In such cases, both factors of the product turn out to be normal in the quotient algebra, and we place both in the denominator set we will construct. Similarly, the central element $D_q$ sometimes decomposes modulo $Q^+_y$ or $Q^-_y$, in which case we include its factors in our denominator set. For instance, $D_q \equiv [12|12]X_{33}$ modulo $Q^+_{213}$.

For $w=(w_+,w_-)\in S_3\times S_3$, we use the elements discussed above to generate a multiplicative set $E_w \subset A/Q_w$. Since all the generators will be normal elements, $E_w$ will be a denominator set. The generators for $E_w$ consist of some which are already normal modulo $Q^+_{w_+}$ and some which are normal modulo $Q^-_{w_-}$. It is convenient to use these two types to generate multiplicative sets $E^{\pm}_{w_{\pm}} \subset A/Q^{\pm}_{w_{\pm}}$. The lists of generators are given in Figure \ref{Oregens} below.

It follows from (E1.1) and (E1.3b--e) that  each quantum minor in row $y$ and the second (respectively, third) column of Figure \ref{Oregens} is normal modulo $Q^+_y$ (respectively, $Q^-_y$).

\begin{figure}[h]
$$\begin{matrix}
y  &\qquad&\text{generators for $E^+_y$} &\qquad&\text{generators for $E^-_y$} \\
 &&\text{(modulo $Q^+_y$)} &&\text{(modulo $Q^-_y$)} \\ \\
321  &&X_{31},\, [23|12]  &&[12|23],\, X_{13} \\
231  &&X_{21},\, X_{32}  &&[13|23],\, X_{13} \\
312  &&X_{31},\, [13|12]  &&X_{23},\, X_{12} \\
132  &&X_{11},\, X_{32},\, [23|23]  &&[23|23],\, X_{23},\, X_{11} \\
213  &&X_{21},\, [12|12],\, X_{33}  &&X_{33},\, X_{12},\, [12|12] \\
123  &&X_{11},\, X_{22},\, X_{33}  &&X_{33},\, X_{22},\, X_{11}
\end{matrix}$$
\caption {Generators for denominator sets $E_w^{\pm}$}   \label{Oregens}
\end{figure}
\end{subsec}

\begin{subsec} \label{subsec3.3} {\bf The localizations.}
For $w=(w_+,w_-)\in S_3\times S_3$, define $E_w$ to be the multiplicative subset of $A/Q_w$ generated by the cosets of the elements listed as generators for $E^+_{w_+}$ and $E^-_{w_-}$ in Figure \ref{Oregens}. The generators of $E_w$, and thus all its elements, are normal in $A/Q_w$. Consequently, $E_w$ is a denominator set, and we define
$$A_w := (A/Q_w) [ E_w^{-1} ].$$
The action of $H$ on $A$ induces a rational action on $A/Q_w$, and since $E_w$ consists of $H$-eigen\-vec\-tors, the latter action induces a rational action of $H$ on $A_w$ by $k$-algebra automorphisms.

We next check that none of the generators of $E_w$ is zero. For instance, Figure \ref{Hprimegens} records the fact that $Q_{231,w_-}+ \langle X_{21}\rangle = Q_{132,w_-} \ne Q_{231,w_-}$ for all $w_-$, whence $X_{21}\notin Q_{231,w_-}$. That $X_{31}\notin Q_{312,w_-}$ follows similarly, since $X_{31}\in Q_{312,w_-}$ would imply $X_{21}\in Q_{312,w_-}$ or $X_{32}\in Q_{312,w_-}$, given that $[23|12]\in Q_{312,w_-}$. To see, for example, that $[13|12] \notin Q_{312,w_-}$, observe that $X_{32}\notin Q_{231,w_-^{-1}}$ and apply $S$. That $[23|23] \notin Q_{132,w_-}$ follows from the fact that $D_q \equiv X_{11}[23|23]$ modulo $Q_{132,w_-}$. The other non-membership statements hold for similar reasons.

 Thus, $E_w$ consists of nonzero elements of the domain $A/Q_w$, and therefore the localization map $A/Q_w \rightarrow A_w$ is injective.

In view of Figure \ref{Hprimegens}, we also see that if $J$ is an $H$-prime of $A$ which properly contains $Q_w$, then $J/Q_w$ contains at least one of the generators of $E_w$. It follows that $A_w$ contains no nonzero $H$-primes, and consequently $A_w$ is $H$-simple. This establishes \S\ref{redloc}(1), and \S\ref{redloc}(2) then follows by \cite[Corollaries II.3.9 and II.6.5]{BG}.

We have noted that if $J$ is an $H$-prime of $A$ properly containing $Q_w$, then $(J/Q_w)\cap E_w$ is nonempty. Consequently, $(P/Q_w)\cap E_w$ is nonempty for any $P$ in $\spec_J A$. On the other hand, if $P\in \spec_w A$, the intersection of the $H$-orbit of $P/Q_w$ is zero, so  $P/Q_w$ contains no $H$-eigenvectors of $A/Q_w$. Therefore
$$\spec_w A = \{ P\in \spec A \mid P \supseteq Q_w \text{\ and\ } (P/Q_w) \cap E_w = \varnothing \},$$
and similarly for $\prim_w A$. Consequently, localization provides a bijection $\spec_w A \leftrightarrow \spec A_w$.
Since $\prim_w A$ consists of the maximal elements of $\spec_w A$ \cite[Corollary II.8.5]{BG}, it follows that localization also provides a bijection $\prim_w A \leftrightarrow \max A_w$. The remaining bijections of \S\ref{redloc}(3) follow because $A_w$ is $H$-simple \cite[Corollary II.3.9]{BG}.
\end{subsec}

\begin{subsec} \label{subsec3.4} {\bf Some isomorphisms and anti-isomorphisms.} Some of the localizations $A_w$ are isomorphic or anti-isomorphic to others, via combinations of $\tau$, $S^{\pm1}$, and $\rho$, as follows. First, note that for $y\ne 312$, the automorphism $\tau$ sends the (coset representatives of the) generators for $E^+_y$ listed in Figure \ref{Oregens} to the generators for $E^-_{y^{-1}}$, while for $z\ne 231$, it sends the generators for $E^-_z$ to the generators for $E^+_{z^{-1}}$. Consequently, for $w_+\ne 312$ and $w_-\ne 231$, the isomorphism $A_{Q_{w_+,w_-}} \rightarrow A/Q_{w_-^{-1},w_+^{-1}}$ of \eqref{tauquo} maps $E_{w_+,w_-}$ onto $E_{w_-^{-1},w_+^{-1}}$. Thus, $\tau$ further induces isomorphisms
\begin{align} \label{tauiso}
A_{w_+,w_-} &\ {\overset\tau\longrightarrow}\  A_{w_-^{-1},w_+^{-1}}  &&(w_+\ne 312,\ w_-\ne 231).
\tag{E3.3a}
\end{align}

Next, observe that for $y\ne 231$, the antipode $S$ sends generators for $E^+_y$ to a set of generators for $E^+_{y^{-1}}$, up to units. For instance, in the case $y=321$ we have 
$$S(X_{31})= q^2[23|12]D_q^{-1} \qquad\text{and}\qquad S([23|12])= q^2X_{31}D_Q^{-1},$$ while in the case $y=312$ we have 
\begin{align*}
S(X_{31}) &= q^2[23|12]D_q^{-1} \equiv q^2 X_{21}X_{32}D_q^{-1} \pmod{Q^+_{231}} \\
S([13|12]) &= -qX_{32}D_q^{-1}.
\end{align*}
The cases $y=132,213$ are similar to the latter case. In the case $y=123$, we have 
$$S(X_{ii})= [\itil|\itil]D_q^{-1} \equiv X_{ii}^{-1} \pmod{Q^+_{123}} \qquad\text{for\ } i=1,2,3.$$
Likewise, for $z\ne 312$, $S$ sends generators for $E^-_z$ to generators for $E^-_{z^{-1}}$, up to units. Consequently, for $w_+\ne 231$ and $w_-\ne 312$, the anti-isomorphism $A/Q_{w_+,w_-} \rightarrow A/Q_{w_+^{-1},w_-^{-1}}$ of \eqref{Squo} maps $E_{w_+,w_-}$ onto $E_{w_+^{-1},w_-^{-1}}$, up to units. Thus, $S$ further induces anti-isomorphisms
\begin{align} \label{Santiiso}
A_{w_+,w_-} &\ {\overset{S}\longrightarrow}\  A_{w_+^{-1},w_-^{-1}}  &&(w_+\ne 231,\ w_-\ne 312).
\tag{E3.3b}
\end{align}

Observation of the effect of the anti-automorphism $\rho$ on generators for the denominator sets $E^{\pm}_y$, combined with \eqref{rhoquo}, yields anti-isomorphisms
\begin{align} \label{rhoantiiso}
A_{w_+,w_-} &\ {\overset{\rho}\longrightarrow}\  A_{w_0w_+^{-1}w_0,w_0w_-^{-1}w_0}  &&(w_+\ne 312,\ w_-\ne 231).
\tag{E3.3c}
\end{align}
Finally, we observe that the composition $\rho\tau$ sends generators for $E^{\pm}_y$ to generators for $E^{\mp}_{w_0yw_0}$, with no restriction on $y$. Hence, there are induced anti-isomorphisms
\begin{align} \label{rhotauanti}
A_{w_+,w_-} &\ {\overset{\rho\tau}\longrightarrow}\  A_{w_0w_-w_0,w_0w_+w_0}  &&(\text{all\;} w_+,\ w_-).
\tag{E3.3d}
\end{align}

In the following, we will write $C\gnoc D$ to record that $k$-algebras $C$ and $D$ are anti-iso\-morph\-ic to each other. A list of some (anti-) isomorphisms obtained from \eqref{tauiso}--\eqref{rhotauanti} is given in Figure \ref{equiv}. In each row, the (anti-) automorphism which is used to obtain a given algebra from the first algebra in that row is displayed as a subscript.

\begin{figure}[h]
$$\begin{array}{lllllll}
A_{321,321}  \\
A_{231,321} &&\cong_\tau A_{321,312} &&\gnoc_{S^{-1}} A_{312,321} &&\gnoc_{S^{-1}\tau} A_{321,231}  \\
A_{132,321} &&\cong_\tau A_{321,132} &&\gnoc_\rho A_{213,321} &&\gnoc_{\rho\tau} A_{321,213}  \\
A_{123,321} &&\cong_\tau A_{321,123}  \\
A_{231,231} &&\gnoc_{\rho\tau} A_{312,312}  \\
A_{312,231}  &&\gnoc_S A_{231,312}  \\
 A_{132,231} &&\gnoc_S A_{132,312} &&\gnoc_{\tau S} A_{231,132} &&\cong_{\rho S} A_{213,312}  \\
  &&\cong_{\rho\tau S} A_{231,213} &&\cong_{S^{-1}\tau S} A_{312,132}  &&\gnoc_{S^{-1}\rho S} A_{213,231}  \\
  &&\gnoc_{S^{-1}\rho\tau S} A_{312,213}  \\
A_{123,231} &&\gnoc_S A_{123,312} &&\gnoc_{\tau S} A_{231,123} &&\cong_{S^{-1}\tau S} A_{312,123}  \\
A_{132,132} &&\gnoc_\rho A_{213,213}  \\
A_{213,132} &&\cong_\tau A_{132,213}  \\
A_{123,132} &&\cong_\tau A_{132,123} &&\gnoc_\rho A_{123,213} &&\gnoc_{\rho\tau} A_{213,123}  \\
A_{123,123}  
 \end{array}$$
\caption{Some isomorphisms and anti-isomorphisms among the $A_w$}    \label{equiv}
\end{figure}
\end{subsec}


\section{Centers of localizations} \label{centers}

\begin{subsec} \label{subsec4.1} {\bf Indeterminates.} Let $w=(w_+,w_-)\in S_3\times S_3$, and recall \S\ref{redloc}(2). Our next task is to identify indeterminates for the Laurent polynomial ring $Z(A_w)$. It will be convenient to use the same symbol to denote an element of $A$ as for the corresponding coset in $A/Q_w$. Thus, for $a \in A$ and $e\in E_w$, we write $ae^{-1}$ for the fraction $(a+Q_w)(e+Q_w)^{-1}$ in $A_w$.

We first observe, using (E1.1) and (E1.3b--e), that the elements listed in position $w$ of Figure \ref{indetsZAw} below are central elements of $A_w$.
\end{subsec}

\begin{sidewaysfigure}
$$\begin{matrix}
 &&&321  &231  &312  &132  &213  &123 \\  \\
 &&&D_q  &D_q  &D_q  &&&D_q \\
321  &&&[23|12]X_{13}^{-1}  &[23|12]X_{13}^{-1}  &X_{12}X_{23}X_{31}^{-1}  &D_q  &D_q  &X_{22}[23|12]X_{31}^{-1} \\
 &&&[12|23]X_{31}^{-1}  \\  \\
 &&&D_q  &D_q  &&D_q  &D_q  \\
231  &&&X_{21}X_{32}X_{13}^{-1}  &[13|23]X_{21}^{-1}  &D_q  &X_{11}X_{23}X_{32}^{-1}  &X_{12}X_{33}X_{21}^{-1}  &D_q  \\
 &&&&[12|13]X_{32}^{-1}  \\  \\
  &&&D_q  &&D_q  &D_q  &D_q  \\
312  &&&[12|23]X_{31}^{-1}  &D_q  &[23|13]X_{12}^{-1}  &X_{11}X_{32}X_{23}^{-1}  &X_{21}X_{33}X_{12}^{-1}  &D_q  \\
 &&&&&[13|12]X_{23}^{-1}  \\  \\
 &&&&D_q  &D_q  &X_{11}  &&X_{11}  \\
132  &&&D_q  &X_{11}X_{23}X_{32}^{-1}  &X_{11}X_{32}X_{23}^{-1}  &[23|23]  &D_q  &X_{22}X_{33}  \\
 &&&&&&X_{23}X_{32}^{-1}  \\  \\
 &&&&D_q  &D_q  &&X_{33}  &X_{11}X_{22}  \\
213  &&&D_q  &X_{12}X_{33}X_{21}^{-1}  &X_{21}X_{33}X_{12}^{-1}  &D_q  &[12|12]  &X_{33}  \\
 &&&&&&&X_{12}X_{21}^{-1}  \\  \\
 &&&D_q  &&&X_{11}  &X_{11}X_{22}  &X_{11}  \\
123  &&&X_{22}[12|23]X_{13}^{-1}  &D_q  &D_q  &X_{22}X_{33}  &X_{33}  &X_{22}  \\
 &&&&&&&&X_{33}
 \end{matrix}$$
 \caption{Indeterminates for centers $Z(A_w)$}    \label{indetsZAw}
 \end{sidewaysfigure}

\begin{subsec}  \label{centerqtori} {\bf Centers of quantum tori.}
We reduce the process of determining the centers of the localizations $Z(A_w)$  to calculating centers of quantum tori whose commutation parameters are powers of $q$. Recall that if
\begin{equation}
B= k\langle x_1^{\pm1},\dots,x_m^{\pm1} \mid x_ix_j= q^{a_{ij}}x_jx_i \text{\ for all\ } i,j=1,\dots,m\rangle,  \tag{E4.2a}
\end{equation}
where $(a_{ij})$ is an antisymmetric $m\times m$ integer matrix, then $Z(B)$ is spanned by the monomials in the $x_i$ it contains, and $Z(B)$ is a Laurent polynomial ring $k[z_1^{\pm1},\dots,z_d^{\pm1}]$ for some monomials $z_j$ (e.g., \cite[Lemma 1.2]{GLet}, \cite[Lemma 2.4(a)]{Van}). The central monomials are determined as follows:
\begin{equation}
x_1^{s_1} x_2^{s_2} \cdots x_m^{s_m} \in Z(B) \quad\iff\quad  \sum_{j=1}^m a_{ij}s_j = 0 \quad\text{for\ } i=1,\dots,m,  \tag{E4.2b}
\end{equation}
because of our assumption that $q$ is not a root of unity.
\end{subsec}

\begin{lemma} \label{lem4.2} Let $w=(w_+,w_-)\in S_3\times S_3$, and let $z_1,\dots,z_d$ be the elements of $A_w$ listed in position $w$ of Figure {\rm\ref{indetsZAw}}. Then $Z(A_w)$ is a Laurent polynomial ring of the form
$$Z(A_w)= k[z_1^{\pm1},\dots,z_d^{\pm1}].$$
\end{lemma}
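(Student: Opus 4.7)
The plan is to use the quantum-torus machinery of \S\ref{centerqtori} together with the structural bound from \S\ref{redloc}(2), which already tells us that $Z(A_w)$ is a Laurent polynomial ring in at most six variables. Accordingly, the task splits into two parts: verify that each listed element $z_i$ is central in $A_w$, and then show that the subring $k[z_1^{\pm 1}, \dots, z_d^{\pm 1}]$ they generate fills out all of $Z(A_w)$.

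For the first part, I would check that each $z_i$ commutes with every generator $X_{jk}$ of $A$ modulo $Q_w$. The element $D_q$ is already central in $A$ by \cite[Theorem 4.6.1]{PW}. For a monomial like $X_{12} X_{23} X_{31}^{-1}$, the three factors pairwise $q^{\pm 1}$-commute by (E1.1), so the commutator with any $X_{jk}$ reduces to a scalar multiple of the element, with scalar determined entirely by $H$-weights; the condition that numerator and denominator have matching $H$-weights forces this scalar to be $1$. The $z_i$ involving $2\times 2$ quantum minors are handled in the same spirit using (E1.3b)--(E1.3e), with the terms in those identities that lie in $Q_w$ simply dropping out.

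For the second part, I would exhibit $A_w$ as (a subalgebra of) a quantum torus. The polynormal regular sequences of \S\ref{subsec2.3} present $A/Q_w$ as an iterated skew-polynomial extension in its surviving generators, each of which $q^{\pm 1}$-commutes with its predecessors modulo $Q_w$. After inverting $E_w$ and, if needed, adjoining inverses of a few additional normal $H$-eigenvectors (which can be chosen among the factors of the $z_i$), one arrives at a quantum torus
$$B_w = k\langle x_1^{\pm 1}, \dots, x_m^{\pm 1} \mid x_j x_i = q^{a_{ij}} x_i x_j \rangle$$
of the form (E4.2a), with $Z(A_w) \subseteq Z(B_w)$. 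By (E4.2b), $Z(B_w)$ is a Laurent polynomial ring whose generating monomials correspond to a $\ZZ$-basis of $\ker (a_{ij})$. Writing each $z_i$ as a monomial in the $x_j$ (up to a unit scalar), I would then verify that the resulting exponent vectors form such a basis, and deduce $Z(A_w) = Z(B_w) = k[z_1^{\pm 1}, \dots, z_d^{\pm 1}]$.

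The main obstacle is the case-work: there are $36$ values of $w$, each requiring its own choice of quantum-torus generators, commutation matrix, and kernel basis. Here the symmetries collected in Figure \ref{equiv} are essential, since the (anti-) isomorphisms induced by $\tau$, $S^{\pm 1}$, and $\rho$ send centers to centers and permute the lists in Figure \ref{indetsZAw} (up to units), so it suffices to carry out the computation for one representative of each of the $14$ orbits.
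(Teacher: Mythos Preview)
Your overall strategy --- pass from $A_w$ to a larger quantum torus $B_w$, compute $Z(B_w)$ via (E4.2b), and then observe that the resulting indeterminates already lie in $A_w$ so that $Z(A_w)=Z(B_w)$ --- is exactly what the paper does, and your plan to use the (anti-)isomorphisms of Figure~\ref{equiv} to reduce the case-work mirrors the paper's organization.

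The gap is in your construction of $B_w$. You assert that \S\ref{subsec2.3} presents $A/Q_w$ as an iterated skew-polynomial extension in its ``surviving generators,'' each $q^{\pm1}$-commuting with its predecessors modulo $Q_w$. But \S\ref{subsec2.3} concerns polynormal generators of the ideals $Q_w$, not of the quotients $A/Q_w$; and the surviving $X_{ij}$ do \emph{not} in general $q$-commute modulo $Q_w$ --- for instance when $Q_w=Q_{321,321}=0$ the full relation $X_{ij}X_{lm}-X_{lm}X_{ij}=\qhat X_{im}X_{lj}$ of (E1.1) is present. So merely inverting the $X_{ij}$ and the factors of the $z_i$ will not yield a quantum torus. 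The paper's remedy is the substantive part of the proof: for each representative $w$ it selects, case by case, a specific set of genuinely $q$-commuting elements --- typically a mixture of certain $X_{ij}$ with $2\times2$ quantum minors such as $[23|12]$, $[12|12]$, $[13|23]$, and sometimes $D_q$ itself --- builds an abstract quantum torus $B$ on those, checks via a GK-dimension comparison that the natural map $B\to\Fract(A/Q_w)$ is injective with image a localization $A'_w\supseteq A_w$, and recovers the omitted $X_{ij}$ inside $A'_w$ from identities like $X_{32}=X_{21}^{-1}(qX_{22}X_{31}+[23|12])$. Choosing these generators and verifying that they capture all of $A_w$ is precisely the work your sketch leaves out.
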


\begin{proof} (a) There are 9 cases in which $A_w$ is a quantum torus, namely, when
\begin{align*}
w=\; &(231,123),\; (132,213),\; (132,123),\; (213,132),\\
 &(213,123),\; (123,312),\; (123,132),\; (123,213),\; (123,123).
 \end{align*}
In all of these cases, $D_q= X_{11}X_{22}X_{33}$ in $A/Q_w$, whence $X_{11}$, $X_{22}$, $X_{33}$ are invertible in $A/Q_w$. Any other $X_{ij}\notin Q_w$ becomes invertible in $A_w$ because it occurs in $E_w$. Thus, $A_w$ takes the form of a quantum torus on the generators $X_{ij}^{\pm1}$ for those $X_{ij}\notin Q_w$.

In the case $w=(123,123)$, the algebra $A_w$ is commutative, equal to a Laurent polynomial ring $k[X_{11}^{\pm1},X_{22}^{\pm1},X_{33}^{\pm1}]$.

In the case $w=(231,123)$, the algebra $A_w$ is generated by $X_{11}^{\pm1}$, $X_{21}^{\pm1}$, $X_{22}^{\pm1}$, $X_{32}^{\pm1}$, $X_{33}^{\pm1}$. A monomial in these generators is central if and only if it is of the form $X_{11}^s X_{22}^s X_{33}^s$ for some $s\in\ZZ$. Thus, $Z(A_w)= k[(X_{11}X_{22}X_{33})^{\pm1}]$, which we rewrite in the form $Z(A_w)= k[D_q^{\pm1}]$. 

The remaining 7 cases listed above are analyzed in the same manner.

(b) There are 8 cases in which $A_w$ can be presented as a localization of a skew-Laurent extension of $\Oq(GL_2(k))$:
\begin{align*}
w=\; &(321,123),\; (231,132),\; (231,213),\; (132,312),  \\
 &(132,132),\; (213,312),\; (213,213),\; (123,321).
 \end{align*}
We deal with the case $w= (321,123)$ as follows. While $A_w$ is not itself a quantum torus, it can be localized to one. Namely, the powers of $X_{21}$ form a denominator set, and the localization $A'_w := A_w[X_{21}^{-1}]$ is a quantum torus on the generators $X_{11}^{\pm1}$, $X_{21}^{\pm1}$, $X_{22}^{\pm1}$, $X_{31}^{\pm1}$, $[23|12]^{\pm1}$, $X_{33}^{\pm1}$. Some of the work of checking this can be avoided by first forming a quantum torus $B$ as in (E4.2a) with $m=6$ and
$$(a_{ij}) = \left( \begin{smallmatrix} 0 &1 &0 &1 &1 &0\\ -1 &0 &1 &1 &0 &0\\ 0 &-1 &0 &0 &0 &0\\ -1 &-1 &0 &0 &0 &1\\ -1 &0 &0 &0 &0 &1\\ 0 &0 &0 &-1 &-1 &0 \end{smallmatrix} \right);$$
observing that there is a $k$-algebra homomorphism $\phi: B\rightarrow \Fract(A/Q_w)$ sending $x_1,\dots,x_6$ to $X_{11}$, $X_{21}$, $X_{22}$, $X_{31}$, $[23|12]$, $X_{33}$, respectively; and checking that $\phi(B)$ is generated by $A_w\cup \{X_{21}^{-1}\}$. (That $X_{32}\in \phi(B)$ follows from the identity $X_{32}= X_{21}^{-1}(qX_{22}X_{31}+[23|12])$.) Now $B$ is a domain with GK-dimension 6, while $\GKdim(\phi(B)) \ge \GKdim(A/Q_w) =6$, from which we conclude that $\ker\phi=0$. Thus, $\phi$ maps $B$ isomorphically onto $A'_w$.
From (E4.2b), we find that a monomial $X_{11}^a X_{21}^b X_{22}^c X_{31}^d [23|12]^e X_{33}^f$ is central if and only if $a=f=c+d$, $b=0$, and $e=-d$. Consequently, $Z(A'_w)$ can be written as a Laurent polynomial ring in indeterminates $X_{11}X_{22}X_{33}$ and $X_{22}X_{31}^{-1}[23|12]$. For convenience, we rewrite these as $D_q$ and $X_{22}[23|12]X_{31}^{-1}$. Since these are elements of $A_w$, we see that $Z(A'_w) \subseteq A_w$, and therefore $Z(A'_w)= Z(A_w)$. This verifies the entry in position $w$ of Figure \ref{indetsZAw}. 

The other 7 cases can be analyzed in the same manner. In fact, it suffices to deal with the cases $(132,312)$ and $(132,132)$, in view of the (anti-) isomorphisms
\begin{align*}
A_{321,123} &\cong_\tau A_{123,321}  \\
A_{231,132} &\cong_\tau A_{132,312} \gnoc_\rho A_{213,312} \cong_\tau A_{231,213}  \\
A_{132,132} &\gnoc_\rho A_{213,213}
\end{align*}
given by \eqref{tauiso}, \eqref{rhoantiiso}. For the mentioned cases, the following localizations $A'_w$ of $A_w$ can be used: 
\begin{align*}
A_{132,312}[X_{33}^{-1}] &&&A_{132,132}[X_{33}^{-1}] 
\end{align*}

(c) Six of the cases analyzed above yield additional cases via the following anti-iso\-morph\-isms obtained from \eqref{Santiiso}:
\begin{align*}
A_{312,213} &\gnoc_S A_{231,213}  &A_{312,123} &\gnoc_S A_{231,123}  &A_{123,231} &\gnoc_S A_{123,312}  \\
A_{312,132} &\gnoc_S A_{231,132}  &A_{132,231} &\gnoc_S A_{132,312}  &A_{213,231} &\gnoc_S A_{213,312}
\end{align*}
For instance, taking $w=(312,213)$, the anti-isomorphism $A_w \rightarrow A_{231,213}$ of \eqref{Santiiso} sends 
\begin{align*}
D_q^{-1} &\longmapsto D_q  &q^2[23|13]^{-1}D_q[12|12]D_q^{-1}[13|23]D_q^{-1} &\longmapsto X_{12}X_{33}X_{21}^{-1}.
\end{align*}
In $A_w$, we have $[23|13]^{-1}D_q[12|12]D_q^{-1}[13|23]D_q^{-1}= X_{21}^{-1}X_{33}^{-1}X_{12}$, and so $Z(A_w)$ can be written as a Laurent polynomial ring in indeterminates $D_q$ and $X_{21}X_{33}X_{12}^{-1}$. This establishes the case $w=(312,213)$, and the other 5 can be analyzed in the same manner.

(d) Next, we consider 4 cases in which $A_w$ is isomorphic to a localization of a skew-Laurent extension of $\Oq(M_{2,3}(k))$:
$$w= (321,132),\; (321,213),\; (132,321),\; (213,321).$$
If $w= (321,132)$, the localization $A'_w := A_w[X_{32}^{-1},X_{33}^{-1}]$ is a quantum torus on the generators $X_{11}^{\pm1}$, $X_{23}^{\pm1}$, $X_{31}^{\pm1}$, $X_{32}^{\pm1}$, $X_{33}^{\pm1}$, $[23|12]^{\pm1}$, $[23|23]^{\pm1}$. Using (E4.2b), we compute that the central monomials in $A'_w$ are the powers of $X_{11}[23|23]= D_q$, whence $Z(A'_w)= k[D_q^{\pm1}]$, and thus $Z(A_w)= k[D_q^{\pm1}]$.The remaining cases follow from this one via the (anti-) isomorphisms
\begin{align*}
A_{321,132} &\cong_\tau A_{132,321}  \gnoc_\rho A_{213,321} \cong_\tau A_{321,213}
\end{align*}
given by \eqref{tauiso}, \eqref{rhoantiiso}.

(e) Four cases in which $Q_w$ has height 2 remain:
$$w= (231,231),\; (231,312),\; (312,231),\; (312,312).$$
If $w=(231,231)$, the localization $A'_w:= A_w[X_{33}^{-1},[23|23]^{-1}]$ is a quantum torus on the generators $X_{13}^{\pm1}$, $X_{21}^{\pm1}$, $X_{32}^{\pm1}$, $X_{33}^{\pm1}$, $[13|23]^{\pm1}$, $[23|23]^{\pm1}$, $D_q^{\pm1}$. To see that $X_{11}$, $X_{12}$, $X_{22}$, $X_{23}$ lie in the given quantum torus, observe, using (E1.3a), that
\begin{align*}
X_{11} &= [23|23]^{-1} \bigl( D_q+ q^{-1}[13|23]X_{21} \bigr)  &X_{12} &= \bigl( [13|23] +qX_{13}X_{32} \bigr) X_{33}^{-1}  \\
X_{22} &= q[13|23]^{-1}[23|23]X_{12}  &X_{23} &= q[13|23]^{-1}[23|23]X_{13}
\end{align*}
in $\Fract(A/Q_w)$. With the help of (E4.2b), we compute that $Z(A'_w)$ is a Laurent polynomial ring in the indeterminates $D_q$, $[13|23]X_{21}^{-1}$, and $X_{13}X_{32}^{-1}[13|23]^{-1}D_q$. The last of these is chosen to take advantage of the identity
\begin{equation}
X_{13}D_q = [12|13][13|23] -q [13|13][12|23],  \tag{E4.3a}
\end{equation}
which is obtained by applying $S$ to the identity $[12|23]= X_{12}X_{23} -q X_{13}X_{22}$ and multiplying by $q^2D_q^2$. In particular,
\begin{equation}
X_{13}D_q \equiv  [12|13][13|23] \quad \text{modulo\ } \bigl\langle [12|23] \bigr\rangle,  \tag{E4.3b}
\end{equation}
whence $X_{13}X_{32}^{-1}[13|23]^{-1}D_q= [12|13]X_{32}^{-1}$ in $A_w$. Consequently, $Z(A'_w)$ is a Laurent polynomial ring in the indeterminates listed in position $w$ of Figure \ref{indetsZAw}. Then $Z'(A_w)= Z(A_w)$, completing this case.

The case $(231,312)$ is analyzed in the same manner, and the other two cases follow via the anti-isomorphisms $A_{312,231} \gnoc_S A_{231,312}$ and $A_{231,231} \gnoc_{\rho\tau} A_{312,312}$ of \eqref{Santiiso}, \eqref{rhotauanti}.

(f) There are 5 cases remaining:
$$w= (321,321),\; (321,231),\; (321,312),\; (231,321),\; (312,321).$$
If $w=(321,312)$, the localization $A'_w := A_w[X_{11}^{-1},X_{21}^{-1},[12|12]^{-1}]$ is a quantum torus on the generators $X_{11}^{\pm1}$, $X_{12}^{\pm1}$, $X_{21}^{\pm1}$, $X_{23}^{\pm1}$, $X_{31}^{\pm1}$, $[12|12]^{\pm1}$, $[23|12]^{\pm1}$, $D_q^{\pm1}$, which we analyze as above.

The cases $(321,231)$, $(231,321)$, $(312,321)$ follow via the (anti-) iso\-morph\-isms
$$A_{312,321} \gnoc_S A_{231,321} \cong_\tau A_{321,312} \gnoc_{S^{-1}} A_{321,231}$$
of \eqref{tauiso}, \eqref{Santiiso}.

In the final case, $w=(321,321)$, the localization 
$$A'_w := A_w[X_{11}^{-1},X_{12}^{-1},X_{21}^{-1},[12|12]^{-1}]$$
is a quantum torus on the generators $X_{11}^{\pm1}$, $X_{12}^{\pm1}$, $X_{13}^{\pm1}$, $X_{21}^{\pm1}$, $X_{31}^{\pm1}$, $[12|12]^{\pm1}$, $[12|23]^{\pm1}$, $[23|12]^{\pm1}$, $D_q^{\pm1}$, which we analyze as above. 
\end{proof}


\section{Primitive ideals} \label{primitives}

\begin{subsec} \label{subsec5.1} {\bf Generators.} Let $w\in S_3\times S_3$, let $z_1,\dots,z_d$ be the elements of $A_w$ listed in position $w$ of Figure \ref{indetsZAw}, and let $\alpha_1,\dots,\alpha_d$ in $\kx$. By \S\ref{redloc}(3),
\begin{enumerate}
\item {\it For any $l\le d$, the elements $z_1-\alpha_1 ,\dots, z_l-\alpha_l$ generate a prime ideal of $A_w$. It is a maximal ideal if and only if $l=d$.}
\item {\it For any $l\le d$, the ideal
\begin{equation} \label{Pprime}
P'= P'_w(\alpha_1,\dots,\alpha_l) := \bigl( (z_1-\alpha_1)A_w +\cdots+ (z_l-\alpha_l)A_w \bigr) \cap (A/Q_w)  \tag{E5.1}
\end{equation}
is a prime ideal of $A/Q_w$. It is primitive {\rm(}whence $P'=P/Q_w$ for some $P\in \prim_w A${\rm)} if and only if $l=d$.}
\item {\it If $k$ is algebraically closed, then every quotient $P/Q_w$, for $P\in \prim_w A$, is an ideal of the form \eqref{Pprime} for $l=d$ and some $\alpha_i\in \kx$.}
\end{enumerate}

We wish to identify generators for the prime ideals \eqref{Pprime}. Each $z_i$ can be expressed as a fraction $e_if_i^{-1}$ for some $e_i,f_i\in E_w$, and so $P'= P''A_w\cap (A/Q_w)$ where
$$P'' = (e_1-\alpha_1f_1)(A/Q_w) +\cdots+ (e_l-\alpha_lf_l)(A/Q_w).$$
Thus, $P'$ contains $P''$ and these two ideals are equal up to $E_w$-torsion. They will be equal -- thus providing a set of generators for $P'$ -- if and only if $P'/P''$ is $E_w$-torsionfree. To establish the latter condition, it will suffice to show that generators for $E_w$ are regular modulo $P''$.

Elements which we shall use to generate the above ideals are given in Figure \ref{polyprimegens}, where $\alpha$, $\beta$, $\gamma$ denote arbitrary nonzero elements of $k$. Each entry in position $w$ of this table is of the form $e-\lambda f$ for some $e,f\in E_w$ and $\lambda\in\kx$, where $ef^{-1}$ appears in position $w$ of Figure \ref{indetsZAw}. Thus, $ef^{-1}$ is a central element of $A_w$ and $f$ is a normal element of $A/Q_w$, from which it follows that $e-\lambda f$ is normal in $A/Q_w$. To summarize:
\begin{enumerate}
\item[(4)] {\it The elements $c_1,\dots,c_d$ listed in position $w$ of Figure {\rm\ref{polyprimegens}} are normal elements of $A/Q_w$.}
\item[(5)] {\it For $l\le d$, the elements $c_1,\dots,c_l$ generate a prime ideal of $A_w$.}
\end{enumerate}
\end{subsec}

\begin{sidewaysfigure} 
$$\begin{matrix}
 &&&321  &231  &312  &132  &213  &123 \\  \\
 &&&D_q-\alpha  &D_q-\alpha  &D_q-\alpha  &&&D_q-\alpha \\
321  &&&[23|12]-\beta X_{13}  &[23|12]-\beta X_{13}  &X_{12}X_{23}-\beta X_{31}  &D_q-\alpha  &D_q-\alpha  &X_{22}[23|12]-\beta X_{31} \\
 &&&[12|23]-\gamma X_{31}  \\  \\
 &&&D_q-\alpha  &D_q-\alpha  &&D_q-\alpha  &D_q-\alpha  \\
231  &&&X_{21}X_{32}-\beta X_{13}  &[13|23]-\beta X_{21}  &D_q-\alpha  &X_{11}X_{23}-\beta X_{32}  &X_{12}X_{33}-\beta X_{21}  &D_q-\alpha  \\
 &&&&[12|13]-\gamma X_{32}  \\  \\
  &&&D_q-\alpha  &&D_q-\alpha  &D_q-\alpha  &D_q-\alpha  \\
312  &&&[12|23]-\beta X_{31}  &D_q-\alpha  &[23|13]-\beta X_{12}  &X_{11}X_{32}-\beta X_{23}  &X_{21}X_{33}-\beta X_{12}  &D_q-\alpha  \\
 &&&&&[13|12]-\gamma X_{23}  \\  \\
 &&&&D_q-\alpha  &D_q-\alpha  &X_{11}-\alpha  &&X_{11}-\alpha  \\
132  &&&D_q-\alpha  &X_{11}X_{23}-\beta X_{32}  &X_{11}X_{32}-\beta X_{23}  &[23|23]-\beta  &D_q-\alpha  &X_{22}X_{33}-\beta   \\
 &&&&&&X_{23}-\gamma X_{32}  \\  \\
 &&&&D_q-\alpha  &D_q-\alpha  &&X_{33}-\alpha  &X_{11}X_{22}-\alpha  \\
213  &&&D_q-\alpha  &X_{12}X_{33}-\beta X_{21}  &X_{21}X_{33}-\beta X_{12}  &D_q-\alpha  &[12|12]-\beta   &X_{33}-\beta   \\
 &&&&&&&X_{12}-\gamma X_{21}  \\  \\
 &&&D_q-\alpha  &&&X_{11}-\alpha  &X_{11}X_{22}-\alpha  &X_{11}-\alpha  \\
123  &&&X_{22}[12|23]-\beta X_{13}  &D_q-\alpha  &D_q-\alpha  &X_{22}X_{33}-\beta   &X_{33}-\beta   &X_{22}-\beta   \\
 &&&&&&&&X_{33}-\gamma
 \end{matrix}$$
\caption {Generators for some prime ideals in factor algebras $A/Q_w$}   \label{polyprimegens}
\end{sidewaysfigure}

\begin{subsec} \label{subsec5.2} To aid in showing that the elements displayed in Figure \ref{polyprimegens} generate prime ideals in various factor algebras of $A$, we record some definitions and useful observations.

Suppose that $R$ is a noetherian ring, $\sigma$ an automorphism of $R$, and $a\in Z(R)$. The {\it generalized Weyl algebra\/} constructed from these data is  the ring $R(\sigma,a)$ generated by $R$ together with two elements $x$, $y$ subject to the relations
\begin{align*}
yx &= a  &xy &= \sigma(a)  &xr &= \sigma(r)x  &yr &= \sigma^{-1}(r)y
\end{align*}
for $r\in R$. If $R$ is a domain and $a\ne 0$, then  $R(\sigma,a)$ is a domain \cite[Proposition 1.3(2)]{Bav}.

Let $\pi: A\rightarrow \OqSLth$ be the quotient map, and $w\in S_3\times S_3$. By Proposition \ref{prop2.4}, $\pi(Q_w)$ is a prime ideal of $\OqSLth$, and so its inverse image, $Q_w+\langle D_q-1\rangle$, is a prime ideal of $A$. Given any $\alpha\in\kx$, we can choose $h\in H$ such that $h(D_q)= \alpha^{-1}D_q$, for instance $h= (\alpha^{-1},1,1,1,1,1)$. Then $h(Q_w+\langle D_q-1\rangle)= Q_w+\langle D_q-\alpha\rangle$, and we conclude that
\begin{enumerate}
\item {\it $Q_w+\langle D_q-\alpha\rangle$ is a prime ideal of $A$ for any $w\in S_3\times S_3$ and $\alpha\in\kx$.}
\end{enumerate}

Proposition \ref{prop2.4} also shows that $\pi(Q_w)\ne \pi(Q_v)$ for any distinct $w,v\in S_3\times S_3$. In particular, $\pi$ must preserve strict inclusions among the $Q_w$, whence
\begin{enumerate}
\item[(2)] {\it If $w,v\in S_3\times S_3$ and $Q_w \subsetneq Q_v$, then $Q_w+ \langle D_q-1\rangle \subsetneq Q_v+ \langle D_q-1 \rangle$. }
\end{enumerate}
This statement is useful in showing that certain elements do not belong to prime ideals of the given form. For instance, if $x\in A$ and $Q_w+ \langle x\rangle= Q_v$ for some $v\ne w$, statement (2) implies that $x\notin Q_w+ \langle D_q-1\rangle$. 

As a particular example, 
$$X_{12}\,,\, X_{23} \notin Q_{231,312}+ \langle D_q-1\rangle= Q_{321,312}+ \langle D_q-1,\, X_{31} \rangle.$$
Since the displayed ideal is prime (by (1)), it does not contain $X_{12}X_{23}$,
and thus
$$X_{12}X_{23}- X_{31} \notin Q_{321,312}+ \langle D_q-1,\, X_{31} \rangle.$$
Similarly, $X_{12}X_{23}-X_{31} \notin Q_{123,312}+ \langle D_q-1\rangle$. Since $Q_{321,312} \subset Q_{123,312}$ and $[23|12] \in Q_{123,312}$, it follows that
$$X_{12}X_{23}- X_{31} \notin Q_{321,312}+ \langle D_q-1,\, [23|12] \rangle.$$

The following easy observation will help us check that certain elements are regular modulo certain ideals.
\begin{enumerate}
\item[(3)] {\it Let $x$ and $y$ be nonzero normal elements in a domain $B$. If $y$ is regular modulo $\langle x\rangle$ {\rm(}e.g., if $\langle x\rangle$ is completely prime and $y\notin \langle x\rangle${\rm)}, then $x$ is regular modulo $\langle y\rangle$.}
\end{enumerate}
For, if $b\in B$ and $xb\in \langle y\rangle$, then $xb=cy$ for some $c\in B$. Regularity of $y$ modulo $\langle x\rangle$ implies that $c=xd$ for some $d\in B$, and thus $xb=xdy$. Cancelling $x$ (valid because $B$ is a domain) yields $b=dy\in \langle y\rangle$. Similarly, $bx\in \langle y\rangle$ implies $b\in \langle y\rangle$.

In applying (3) in factor algebras $B$ of $A$, we continually rely on the fact that all prime ideals are completely prime (\S\ref{subsec1.1}(1)).

For example, we know from (1) that $B:= A/\bigl( Q_{321,312}+ \langle D_q-1 \rangle\bigr)$ is a domain and that $X_{31}B$ is a prime ideal of $B$. As shown above,  $X_{12}X_{23}-X_{31}$ is not in $X_{31}B$. Thus, (3) implies that  $X_{31}$ is regular modulo $(X_{12}X_{23}-X_{31})B$. We also saw that  $X_{12}X_{23}-X_{31}$ is not in $[23|12]B$. The latter is a prime ideal of $B$, because
$$Q_{321,312}+ \langle D_q-1,\, [23|12] \rangle= Q_{312,312}+ \langle D_q-1 \rangle.$$
Consequently, (3) implies that  $[23|12]$ is regular modulo $(X_{12}X_{23}-X_{31})B$. Restating our information in $A$, we obtain that $X_{31}$ and $[23|12]$ are regular modulo $Q_{321,312}+ \langle D_q-1,\, X_{12}X_{23}-X_{31}\rangle$.

Finally, we record one extension of (3):
\begin{enumerate}
\item[(4)] {\it Let $x$, $y$, $z$ be nonzero normal elements in a domain $B$. If $y$ is regular modulo $\langle x\rangle$ and $z$ is regular modulo $\langle x,y\rangle$, then $x$ is regular modulo $\langle y,z\rangle$.}
\end{enumerate}
For, if $b\in B$ and $xb\in \langle y,z\rangle$, then $xb= c_1y+c_2z$ for some $c_i\in B$, and $c_2z\in \langle x,y\rangle$. By hypothesis, $c_2= xd_1+yd_2$ for some $d_j\in B$, and $x(b-d_1z)= c_1y+ yd_2z\in \langle y\rangle$. Since $x$ is regular modulo $\langle y\rangle$ by (3), it follows that $b-d_1z\in \langle y\rangle$ and thus $b\in \langle y,z\rangle$. Similarly, $bx\in \langle y,z\rangle$ implies $b\in \langle y,z\rangle$.

For example, from (1) the algebra $B:= A/ \bigl( Q_{231,231}+ \langle D_q-1\rangle \bigr)$ is a domain and $X_{21}B$ is a prime ideal. By (2),  $X_{32}$ is not in $Q_{132,213}+ \langle D_q-1\rangle$. Since the latter ideal contains $Q_{231,231}$ as well as $X_{21}$ and $[12|13]$, we see that $[12|13] -X_{32} \notin Q_{231,231}+ \langle D_q-1,\, X_{21}\rangle$. Thus,  $[12|13]-X_{32}$ is not in $X_{21}B$, so it is regular modulo $X_{21}B$. Similarly, by inspecting $Q_{123,213}$ we see that  $[13|23]-X_{21}$ is not in $X_{21}B + ([12|13]-X_{32})B$. This ideal of $B$ is prime, as we shall prove in the case $(132,231)$ of Lemma \ref{lem5.3}. Once that is established, (4) will imply that $X_{21}$ is regular modulo
$$Q_{231,231}+ \langle D_q-1,\, [12|13]-X_{32},\, [13|23]-X_{21} \rangle.$$
\end{subsec}

\begin{lemma} \label{lem5.3} Let $w\in S_3\times S_3$, and let $a_1,\dots,a_d$ be the elements of $A/Q_w$ listed in position $w$ of Figure {\rm\ref{polyprimegens}} {\rm(}for some choices of $\alpha,\beta,\gamma \in \kx${\rm)}. For $l=1,\dots,d$, the elements $a_1,\dots,a_l$ generate a prime ideal of $A/Q_w$.
\end{lemma}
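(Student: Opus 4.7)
The plan is to proceed by induction on $l$, showing for each $w \in S_3\times S_3$ and each $l \le d$ that the ideal $J_l := \langle a_1,\dots,a_l\rangle$ of $A/Q_w$ is prime. Since all prime ideals of $A$ are completely prime (\S\ref{subsec1.1}(1)), it suffices to show that the factor $(A/Q_w)/J_l \cong A/(Q_w + \langle a_1,\dots,a_l\rangle)$ is a domain. The inductive step splits into two subproblems: (i) verify that $a_l$ is regular modulo $J_{l-1}$, and (ii) identify the factor after adjoining $a_l$ with a known domain. All the $a_i$ are normal in $A/Q_w$ by \S\ref{subsec5.1}(4), so regularity assertions are within reach of the criteria \S\ref{subsec5.2}(3) and (4).

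For the base case, the great majority of entries in Figure \ref{polyprimegens} begin with $a_1 = D_q - \alpha$, which by \S\ref{subsec5.2}(1) already generates a prime of $A/Q_w$. The exceptional first generators (for instance $X_{11}-\alpha$ in positions $(132,132)$, $(132,123)$, $(123,132)$, $X_{33}-\alpha$ in $(213,213)$, and analogous entries) can be treated directly: in each such case the element $X_{ii}-\alpha$ differs from $D_q-\alpha'$ only by a unit, or sits inside a skew polynomial tower whose quotient by $X_{ii}-\alpha$ is visibly a domain. Thus the inductive hypothesis is established at $l=1$ for every position in the table.

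For the inductive step, the computation is driven by the simple observation that each $a_l$ has the form $e-\lambda f$ with $e,f\in E_w$. When the preceding prime $Q_w + J_{l-1}$ already contains one of the $H$-prime generators, the relation $e=\lambda f$ reduces matters to the next $H$-prime in the stratification. For instance, adjoining $[23|12]-\beta X_{13}$ at position $(321,321)$ after $D_q - \alpha$ makes $X_{13}$ and $[23|12]$ simultaneously either vanish or become units, and using the Laplace relation (E1.3a) one identifies the factor with $A/(Q_{231,321} + \langle D_q - \alpha\rangle)$ localized at a normal element, again prime by \S\ref{subsec5.2}(1). The required non-membership assertions (to invoke (3) and (4) of \S\ref{subsec5.2}) follow by spotting a larger $H$-prime $Q_v \supsetneq Q_w$ lacking the relevant factor of $a_l$ and citing \S\ref{subsec5.2}(2), as in the worked sample at the end of \S\ref{subsec5.2}. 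Where the factor ring is not itself a pulled-back prime quotient, one recognizes it as a generalized Weyl algebra $R(\sigma,a)$ (as defined at the start of \S\ref{subsec5.2}) over a smaller domain, using the quantum minor commutation relations (E1.3b--i); Bavula's theorem then gives the domain property.

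The symmetry reductions from Figure \ref{equiv}, together with the observation that $\tau$, $S$, $\rho$ send the generator lists in Figure \ref{polyprimegens} for position $w$ to those for the corresponding position $\tau(w)$, etc.\ (up to units and up to the scalars $\alpha,\beta,\gamma$), collapse the $36$ positions to a manageable handful of independent cases. The main obstacle will be the three height-$3$ entries with genuinely mixed generators, namely $(321,321)$, $(231,231)$ (together with its $\rho\tau$-image $(312,312)$), and $(132,132)$ (with its $\rho$-image $(213,213)$); in these cases the final generator produces a relation that blends row- and column-indexed minors, and verification that the factor is a domain requires combining the quantum Laplace identity (E1.3a) -- as in the derivation of (E4.3a)--(E4.3b) in the proof of Lemma \ref{lem4.2} -- with a generalized Weyl algebra presentation. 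Once these cases are handled, all remaining positions follow either by inductive reuse of earlier cases or by the (anti-)iso\-morph\-isms of \S\ref{subsec3.4}.
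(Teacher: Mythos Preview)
Your overall architecture (induction on $l$, symmetry reductions, use of \S\ref{subsec5.2}(1)--(4), and generalized Weyl algebras for the small cases) matches the paper's, but there is a genuine gap in your treatment of the large cases $(321,321)$, $(321,312)$, $(231,321)$, $(231,231)$, and their images. Your concrete example is simply wrong: adjoining $[23|12]-\beta X_{13}$ to $A/\langle D_q-\alpha\rangle$ does \emph{not} make $X_{13}$ and $[23|12]$ ``vanish or become units,'' and the resulting factor is not a localization of $A/(Q_{231,321}+\langle D_q-\alpha\rangle)$; no $H$-prime generator has been killed, so no ``next $H$-prime in the stratification'' appears. More seriously, for these cases there is no visible way to present the factor $(A/Q_w)/J_l$ directly as a generalized Weyl algebra or as an iterated Ore extension of a known domain---the relations mix $1\times 1$ and $2\times 2$ minors in a way that does not fit that template.

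The paper's key idea, which your proposal does not mention, is to abandon the attempt to recognize the factor ring directly and instead argue \emph{indirectly via the localization $A_w$}. By \S\ref{subsec5.1}(5) the ideal $J_l A_w$ is already known to be prime in $A_w$, so $J_l$ is prime in $A/Q_w$ as soon as one knows that $(A/Q_w)/J_l$ embeds in $A_w/J_lA_w$, i.e., that $(A/Q_w)/J_l$ is $E_w$-torsionfree. This reduces the problem to checking that each \emph{generator of $E_w$} is regular modulo $J_l$. That is where \S\ref{subsec5.2}(3)--(4) are actually deployed in the paper: not to show $a_l$ is regular modulo $J_{l-1}$ (which is a consequence, not an input), but to show that elements like $X_{31}$, $[23|12]$, $X_{13}$, $[12|23]$ remain regular modulo the successive $J_l$. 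The non-membership checks \S\ref{subsec5.2}(2) and the identity (E4.3b) feed into this regularity argument. You should reorganize the large cases around this localization strategy; without it the proof does not close.
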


\begin{proof} \lowunder{$l=1$}. For any $\alpha\in\kx$, \S5.2(1) implies that $\langle D_q-\alpha\rangle$ is a prime ideal of $A/Q_w$. This establishes the case $l=1$ of the lemma for 29 of the 36 choices of $w$. We shall deal with the other 7 choices in cases (a) and (b) below. For now, note also that the lemma is complete in the following 12 cases:
\begin{align*}
w = &\;(321,132),\; (321,213),\; (231,312),\; (231,123),\; (312,231),\; (312,123), \\
 &\;(132,321),\; (132,213),\; (213,321),\; (213,132),\; (123,231),\; (123,312).
\end{align*}

(a) As in the proof of Lemma \ref{lem4.2}, we next address the 9 cases in which $A_w$ is a quantum torus. Four of these cases are covered under $(l=1)$ above, leaving the following 5 cases:
$$w= (132,123),\; (213,123),\; (123,132),\; (123,213),\; (123,123).$$

If $w= (123,123)$, the algebra $A/Q_w$ is a commutative Laurent polynomial ring with indeterminates $X_{11}$, $X_{22}$, $X_{33}$, and the desired results are clear. If $w= (132,123)$, then
\begin{gather*}
A/\bigl( Q_w+ \langle X_{11}-\alpha \rangle \bigr) \cong k\langle x^{\pm1},y,z^{\pm1} \mid xy=qyx,\ xz=zx,\ yz=qzy \rangle \\
A/\bigl( Q_w+ \langle X_{11}-\alpha,\; X_{22}X_{33}-\beta \rangle \bigr) \cong k\langle x^{\pm1},y \mid xy=qyx \rangle,
\end{gather*}
both of which are localizations of quantum affine spaces and so are domains. Thus, the ideals $\langle X_{11}-\alpha \rangle$ and $\langle X_{11}-\alpha,\; X_{22}X_{33}-\beta \rangle$ are prime ideals of $A/Q_w$. The other 3 cases follow the same pattern.

(b) Next, consider the 8 cases of Lemma \ref{lem4.2}(b):
\begin{align*}
w=\; &(321,123),\; (231,132),\; (231,213),\; (132,312),  \\
 &(132,132),\; (213,312),\; (213,213),\; (123,321).
 \end{align*}

\lowunder{$w=(132,132)$}. The relevant quotient algebras have the following forms:
\begin{multline}
{\begin{aligned}
A/\bigl( Q_w+ \langle X_{11}-\alpha\rangle \bigr) &\cong \Oq(GL_2(k))  \\
A/\bigl( Q_w+ \langle X_{11}-\alpha,\; [23|23]-\beta \rangle \bigr) &\cong \Oq(SL_2(k))  \\
A/\bigl( Q_w + \langle X_{11}-\alpha,\; [23|23]-\beta,\; X_{23}-\gamma X_{32} \rangle \bigr) &\cong
\end{aligned}  \notag}  \\
 k\langle x,z,y\mid xz=qzx,\; zy=qyz,\; xy-q\gamma z^2= yx-q^{-1}\gamma z^2= \beta \rangle.
\end{multline}
(In the third case, $x$, $z$, $y$ correspond to $X_{22}$, $X_{32}$, $X_{33}$, respectively.)
The first two algebras are known domains. The third is a generalized Weyl algebra $k[z](\sigma,a)$ where $\sigma$ is the automorphism of $k[z]$ sending $z\mapsto qz$ and $a=q^{-1}\gamma z^2 +\beta$, and thus it too is a domain. This establishes the case $w=(132,132)$. 

The case $w=(213,213)$ is handled via the induced anti-isomorphism 
$$A/Q_{132,132} \gnoc_\rho A/Q_{213,213}.$$
These results, together with (a), cover the remaining cases of $(l=1)$.

\lowunder{$w= (231,132)$}. The quotient $A/\bigl( Q_w+ \langle D_q-\alpha,\; X_{11}X_{23}- \beta X_{32} \rangle \bigr)$ is isomorphic to the $k$-algebra with generators $w$, $x$, $z$, $y$, $D^{\pm1}$ and relations
\begin{gather*}
\begin{align*}
wx &= qxw  &\qquad\qquad wz &= zw  &\qquad\qquad wy &= yw  \\
wD &= qDw  &xz &= qzx  &zy &= qyz  \\
xD &= Dx  &zD &= Dz  &yD &= Dy
\end{align*}  \\
xy- q\alpha^{-1}\beta z^2D = yx-q^{-1}\alpha^{-1}\beta z^2D = D.
\end{gather*}
(Here $w$, $x$, $z$, $y$, $D$ correspond to $X_{21}$, $X_{22}$, $X_{32}$, $X_{33}$, $[23|23]$, respectively.)
This algebra is a domain because it can be expressed as a skew polynomial extension of a generalized Weyl algebra in the form $\bigl( k[z,D^{\pm1}](\sigma,a) \bigr) [w;\rho]$ where the automorphism $\sigma$ sends $z\mapsto qz$ and $D\mapsto D$, the element $a= q^{-1}\alpha^{-1}\beta z^2D +D$, and the automorphism $\rho$ sends
\begin{align*}
z &\mapsto z  &D &\mapsto qD  &x &\mapsto qx  &y &\mapsto y.
\end{align*}

In view of the induced (anti-) isomorphisms
\begin{multline*}
A/\bigl( Q_{231,132}+ \langle D_q-\alpha,\; X_{11}X_{23}- \beta X_{32} \rangle \bigr)
 \cong_\tau  \\
  \shoveright{A/\bigl( Q_{132, 312}+ \langle D_q-\alpha,\; X_{11}X_{32}- \beta X_{23} \rangle \bigr)}  \\
\shoveleft{A/\bigl( Q_{231,132}+ \langle D_q-\alpha,\; X_{11}X_{23}- \beta X_{32} \rangle \bigr)
 \gnoc_\rho}  \\
 \shoveright{A/\bigl( Q_{231,213}+ \langle D_q-\alpha,\; X_{12}X_{33}-\beta X_{21} \rangle \bigr)}  \\
 \shoveleft{A/\bigl( Q_{231,213}+ \langle D_q-\alpha,\; X_{12}X_{33}- \beta X_{21} \rangle \bigr)
 \cong_\tau}  \\
  A/\bigl( Q_{213, 312}+ \langle D_q-\alpha,\; X_{21}X_{33}- \beta X_{12} \rangle \bigr),
 \end{multline*}
the cases $(132,312)$, $(231,213)$, and $(213,312)$ follow.

\lowunder{$w= (321,123)$}. Here we find that
$$A/\bigl( Q_w+ \langle D_q-\alpha,\; X_{22}[23|12]- \beta X_{31} \rangle \bigr) \cong \bigl( k[z^{\pm1},D](\sigma,a) \bigr) [w;\rho],$$
where $a= q^{-1}\beta^{-1} z^2D +D$ and $\sigma$, $\rho$ act as in the case $(231,132)$. Finally, the case $(123,321)$ follows from this one by applying $\tau$.

(c) Of the 6 cases of Lemma \ref{lem4.2}(c), $(312,123)$ and $(123,231)$ are covered under $(l=1)$, leaving
$$w= (312,132),\; (312,213),\; (132,231),\; (213,231).$$

The anti-isomorphism $A/Q_{231,132} \rightarrow A/Q_{312,132}$ of \eqref{Squo} sends $D_q-\alpha$ to $D_q^{-1}-\alpha$ and
\begin{multline*}
X_{11}X_{23}-\beta X_{32}= [12|13]-\beta X_{32} \longmapsto  \\
-q^{-1}X_{23}D_q^{-1}+ \beta q [13|12]D_q^{-1}= -q^{-1}X_{23}D_q^{-1}+ \beta q X_{11}X_{32} D_q^{-1},
\end{multline*}
and consequently
\begin{multline*}
A/\bigl( Q_{231,132}+ \langle D_q-\alpha,\; X_{11}X_{23}- \beta X_{32} \rangle \bigr)   \gnoc \\
 A/\bigl( Q_{312,132}+ \langle D_q-\alpha^{-1},\; X_{11}X_{32}- \beta^{-1}q^{-2} X_{23} \rangle \bigr).
\end{multline*}
Since $\alpha^{-1}$ and $\beta^{-1}q^{-2}$ run through all choices of nonzero scalars in $k$, the case $(312,132)$ thus follows from the case $(231,132)$. Similarly, the case $(312,213)$ follows from the case $(231,213)$.

The cases $(132,231)$ and $(213,231)$ now follow from the cases $(312,132)$ and $(312,213)$ via $\tau$.

(d) The four cases of Lemma \ref{lem4.2}(d), namely
$$w= (321,132),\; (321,213),\; (132,321),\; (213,321),$$
are all covered under $(l=1)$.

(e)(f) Of the 9 cases considered in Lemma \ref{lem4.2}(e)(f), $(231,312)$ and $(312,231)$ are covered under $(l=1)$, leaving
\begin{align*}
w=\; &(321,321),\; (321,231),\; (321,312),\\
 &(231,321),\; (231,231),\; (312,321),\; (312,312).
 \end{align*}

\lowunder{$w= (321,312)$}. We must show that the ideal
$$P := Q_w + \langle D_q-\alpha,\; X_{12}X_{23}- \beta X_{31} \rangle$$
of $A$ is prime. If $h= (\delta,1,\varepsilon,1,1,1) \in H$, where $\delta,\varepsilon \in \kx$ are chosen so that $\delta\varepsilon= \alpha^{-1}$ and $\delta^{-1}\varepsilon= \beta$, then
$$h \bigl( Q_w + \langle D_q-1,\; X_{12}X_{23}- X_{31} \rangle \bigr) = Q_w + \langle D_q-\alpha,\; X_{12}X_{23}- \beta X_{31} \rangle.$$
Hence, we may assume that $\alpha=\beta=1$. In view of the discussion in \S5.1, it will suffice to show that the generators of $E_w$ are all regular modulo $P$. In the present case, these generators are
$$X_{31}\,,\; [23|12]\,,\; X_{23}\,,\; X_{12}\,.$$
To show that these four elements are regular modulo $P$, we verify the corresponding properties for the ideal $P':= P/\bigl( Q_w + \langle D_q-1\rangle \bigr)$ in the domain 
$$A':= A/\bigl( Q_w + \langle D_q-1\rangle \bigr).$$

That $X_{31}$ and $[23|12]$ are regular modulo $P'$ is already worked out in \S5.2(d). Similarly, we check that $X_{23}A'$ and $X_{12}A'$ are prime ideals of $A'$ which do not contain $X_{12}X_{23}- X_{31}$, and then \S5.2(3) implies that $X_{23}$ and $X_{12}$ are regular modulo $P'$. Therefore we conclude that $P$ is a prime ideal, completing the case $w= (321,312)$. 

The cases $(231,321)$, $(321,231)$, and $(312,321)$ follow via $\tau$, $S$, and $\tau S$, respectively, taking account of (E4.3b) in the case $(321,231)$.

\lowunder{$w= (231,231)$}. We must show that the ideals
\begin{align*}
P &:= Q_w+ \langle D_q-\alpha,\, [13|23]-\beta X_{21} \rangle  \\
M &:= Q_w+ \langle D_q-\alpha,\, [13|23]-\beta X_{21},\, [12|13]-\gamma X_{32} \rangle
\end{align*}
are prime. Without loss of generality, $\alpha=\beta=\gamma=1$. It suffices to show that the generators of $E_w$, namely
$$X_{21}\,,\; X_{32}\,,\; [13|23]\,,\; X_{13}\,,$$
are regular modulo $P$ and $M$. We shall work with the images of $P$ and $M$ in the domain $A' := A/ \bigl( Q_w+ \langle D_q- 1\rangle \bigr)$, which we denote $P'$ and $M'$.

Via \S5.2(1)(2), we see that $X_{21}A'$ and $X_{32}A'$ are prime ideals of $A'$ which do not contain $[13|23]- X_{21}$. Hence, \S5.2(3) implies that $X_{21}$ and $X_{32}$ are regular modulo $P'$. Since $[13|23]$ is congruent to $X_{21}$ modulo $P'$, it follows that $[13|23]$ is regular modulo $P'$. 

For the regularity of $X_{13}$ modulo $P'$, we show that $[13|23]- X_{21}$ is regular modulo $X_{13}A'$. Observe that $A'/X_{13}A' \cong A''/[12|23]A''$ where 
$$A'' := A/ \bigl( Q_{231,312}+ \langle D_q-1\rangle \bigr).$$
Hence, it will be enough to show that $[13|23]- X_{21}$ is regular modulo $[12|23]A''= X_{12}X_{23}A''$. For that, regularity modulo both $X_{12}A''$ and $X_{23}A''$ will suffice. Via \S5.2(1), we see that $X_{12}A''$ and $X_{23}A''$ are prime ideals of $A''$. Since
$$Q_{231,312}+ \langle D_q-1,\, X_{12},\, X_{23}\rangle= Q_{231,123}+ \langle D_q-1\rangle \subsetneq Q_{132,123}+ \langle D_q-1\rangle,$$
we find that $[13|23]- X_{21} \notin X_{12}A''+ X_{23}A''$. Thus, $[13|23]- X_{21}$ is regular modulo both $X_{12}A''$ and $X_{23}A''$, as desired.

Therefore $X_{13}$ is regular modulo $P'$, concluding the proof that $P'$ is prime.
 
Inspecting $Q_{132,123}$, which contains $Q_{231,231}$ as well as $X_{21}$ and $[12|13]$ but not $X_{32}$ (by \S5.2(2)), we see that $[12|13]-X_{32}$ is not in $X_{21}A'$. We have already observed that the latter ideal is prime. Hence, $[12|13]-X_{32}$ is regular modulo $X_{21}A'$. From the case $(132,231)$ done in (c) above, we know that
$$Q_{132,231}+ \langle D_q-1,\, X_{11}X_{23}-X_{32}\rangle= Q_{132,231}+ \langle D_q-1,\, [12|13]-X_{32}\rangle$$
is a prime ideal of $A$, and thus $X_{21}A'+ ([12|13]-X_{32})A'$ is prime. Inspecting $Q_{123,213}$, which contains $Q_{231,231}$ as well as $X_{21}$, $X_{32}$, and $[12|13]-X_{32}$ but not $[12|13]$, we see that $[13|23]- X_{21}\notin X_{21}A'+ ([12|13]-X_{32})A'$. Hence, $[13|23]- X_{21}$ is regular modulo $X_{21}A'+ ([12|13]-X_{32})A'$. We now conclude from \S5.2(4) that $X_{21}$ is regular modulo $M'$.

A symmetric argument shows that $X_{32}$ is regular modulo $M'$. Since $[13|23|$ and $[12|13]$ are congruent to $X_{21}$ and $X_{32}$ modulo $M'$, it follows that $[13|23|$ and $[12|13]$ are regular modulo $M'$. In view of (E4.3b), $X_{13}$ is congruent to $[12|13][13|23]$ modulo $M'$, and thus it is regular modulo $M'$. We now conclude that $M'$ is prime, concluding the case $(231,231)$.

The case $(312,312)$ follows via $\tau$.

\lowunder{$w= (321,321)$}. Here $Q_w=0$, and we must show that the ideals
\begin{align*}
P &:= \langle D_q-\alpha,\, [23|12]-\beta X_{13} \rangle  \\
M &:= Q_w+ \langle D_q-\alpha,\,  [23|12]-\beta X_{13} ,\, [12|23]-\gamma X_{31} \rangle
\end{align*}
are prime. Without loss of generality, $\alpha=\beta=\gamma=1$. It suffices to show that the generators of $E_w$, namely
$$X_{31}\,,\; [23|12]\,,\; [12|23]\,,\; X_{13}\,,$$
are regular modulo $P$ and $M$. We shall work with $P' := P/ \langle D_q-1\rangle$ and $M' := M/\langle D_q-1\rangle$ in the domain $A' := A/\langle D_q-1\rangle$.

Via \S5.2(1), we see that $X_{31}A'$, $[12|23]A'$, and $X_{13}A'$ are prime. Since
$$\langle D_q-1,\, X_{31}\rangle \subsetneq Q_{123,321}+ \langle D_q-1\rangle \subsetneq Q_{123,312}+ \langle D_q-1\rangle,$$
we see by \S5.2(2) that $[23|12]-X_{13} \notin X_{31}A'$. Similarly, this element is not in either $[12|23]A'$ or $X_{13}A'$, as we see by inspecting $Q_{321,123} \subsetneq Q_{312,123}$ and $Q_{321,312} \subsetneq Q_{312,312}$. Hence, $[23|12]-X_{13}$ is regular modulo each of $X_{31}A'$, $[12|23]A'$, and $X_{13}A'$. By \S5.2(3), $X_{31}$, $[12|23]$, and $X_{13}$ are all regular modulo $P'$. Moreover, $[23|12]$ is congruent to $X_{13}$ modulo $P'$, and so it is regular modulo $P'$. Therefore $P'$ is a prime ideal of $A'$.

As just checked, $[23|12]-X_{13}$ is regular modulo $X_{13}A'$. By \S5.2(1),
$$\langle D_q-1,\, X_{13},\, [23|12]-X_{13} \rangle = Q_{312,312}+ \langle D_q-1 \rangle$$
is prime in $A$, and so $X_{13}A'+ ([23|12]-X_{13})A'$ is prime in $A'$. This ideal does not contain $[13|23]-X_{31}$, as we see by inspecting $Q_{123,312} \subsetneq Q_{123,132}$ and $Q_{123,312} \subsetneq Q_{123,213}$, and so $[13|23]-X_{31}$ is regular modulo $X_{13}A'+ ([23|12]-X_{13})A'$. Thus, \S5.2(4) implies that $X_{13}$ is regular modulo $M'$. Applying $\tau$, which induces an automorphism of $A'$ stabilizing $M'$, we find that $X_{31}$ is regular modulo $M'$. Since $[23|12]$ and $12|23]$ are congruent to $X_{13}$ and $X_{31}$ modulo $M'$, it follows that $[23|12]$ and $12|23]$ are regular modulo $M'$. Therefore $M'$ is a prime ideal of $A'$, concluding the last case of the lemma.
\end{proof}

\begin{corollary} \label{cor5.4}
Let $w\in S_3\times S_3$, and let $a_1,\dots,a_d$ be the elements of $A/Q_w$ listed in position $w$ of Figure {\rm\ref{polyprimegens}} {\rm(}for some choices of $\alpha,\beta,\gamma \in \kx${\rm)}. Then $a_1,\dots,a_d$ is a normal regular sequence in $A/Q_w$.
\qed\end{corollary}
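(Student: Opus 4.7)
The plan is to read the corollary off Lemma \ref{lem5.3} together with the normality observation \S\ref{subsec5.1}(4) and the structure of the localizations $A_w$ developed in Section \ref{localizations}.

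Normality is immediate: \S\ref{subsec5.1}(4) asserts that each $a_i$ is normal in $A/Q_w$, and a normal element remains normal modulo any two-sided ideal. Hence each $a_{l+1}$ is normal modulo $\langle a_1,\dots,a_l\rangle$, and only regularity requires proof. For that, the key observation is that, by Lemma \ref{lem5.3}, the ideal $J_l := \langle a_1,\dots,a_l\rangle$ of $A/Q_w$ is prime, and by \S\ref{subsec1.1}(1) it is completely prime. Consequently $(A/Q_w)/J_l$ is a domain, so regularity of $a_{l+1}$ modulo $J_l$ reduces to the nonvanishing assertion $a_{l+1}\notin J_l$, i.e.\ to the strict chain of primes $J_0\subsetneq J_1\subsetneq\cdots\subsetneq J_d$.

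To establish these strict inclusions I would extend the ideals to the localization $A_w$ and translate the question to the center. Each $a_i$ has the form $e_i-\alpha_i f_i$, where $z_i=e_if_i^{-1}$ is the corresponding indeterminate in Figure \ref{indetsZAw} and $f_i\in E_w$; thus in $A_w$ we have $a_i=(z_i-\alpha_i)f_i$ with $f_i$ a unit, and so $J_lA_w=\langle z_1-\alpha_1,\dots,z_l-\alpha_l\rangle A_w$. By Lemma \ref{lem4.2}, $Z(A_w)=k[z_1^{\pm1},\dots,z_d^{\pm1}]$, in which $\langle z_1-\alpha_1,\dots,z_l-\alpha_l\rangle$ is manifestly a strictly increasing chain of primes. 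The bijection $\spec A_w\leftrightarrow\spec Z(A_w)$ from \S\ref{redloc}(3) transfers this strictness to $\spec A_w$, yielding $J_lA_w\subsetneq J_{l+1}A_w$. Since the localization map $A/Q_w\hookrightarrow A_w$ is injective (\S\ref{subsec3.3}), the strict inclusion descends to $J_l\subsetneq J_{l+1}$, completing the regularity argument.

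The hard work is therefore already packaged in Lemmas \ref{lem4.2} and \ref{lem5.3}; the only real bookkeeping step is reconciling the scalar parameters $\alpha,\beta,\gamma$ in Figure \ref{polyprimegens} with the values $\alpha_1,\dots,\alpha_d$ used when describing $J_lA_w$ in terms of $z_i-\alpha_i$, and this matching is immediate from the construction of the $a_i$'s in \S\ref{subsec5.1}. I expect the only point that might look delicate is the descent of strict inclusion from $A_w$ to $A/Q_w$, but because $E_w$ consists of regular elements this follows automatically, so no additional computation beyond Lemmas \ref{lem4.2} and \ref{lem5.3} is needed.
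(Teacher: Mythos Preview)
Your argument is correct and is precisely the route the paper intends: the corollary is stated with an immediate \qed\ because normality is recorded in \S\ref{subsec5.1}(4), primeness (hence complete primeness via \S\ref{subsec1.1}(1)) of each $\langle a_1,\dots,a_l\rangle$ is Lemma \ref{lem5.3}, and the strictness of the chain follows from the corresponding strict chain in $Z(A_w)=k[z_1^{\pm1},\dots,z_d^{\pm1}]$ via \S\ref{redloc}(3). The only cosmetic remark is that the descent of strict inclusion from $A_w$ to $A/Q_w$ does not actually use injectivity of the localization map; it is automatic because $J_l=J_{l+1}$ would force $J_lA_w=J_{l+1}A_w$.
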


We can now establish our  main theorem.

\begin{theorem}  \label{mainthm} Let $A= \OqGLth$, with $k$ algebraically closed and $q$ not a root of unity.

{\rm (a)} Let $w\in S_3\times S_3$, and let $a_1,\dots,a_d$ be the elements listed in position $w$ of Figure {\rm\ref{polyprimegens}} {\rm(}for some choices of $\alpha,\beta,\gamma \in \kx${\rm)}, now viewed as elements of $A$. Then $Q_w+ \langle a_1,\dots,a_d\rangle$ is a primitive ideal of $A$.

{\rm (b)} The ideals described in {\rm(a)} constitute all the primitive ideals of $A$.
\end{theorem}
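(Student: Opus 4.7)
The plan for part (a) is to combine Lemma \ref{lem5.3} with the stratification bijections of \S\ref{subsec5.1}. First I would fix $w \in S_3 \times S_3$ and scalars $\alpha, \beta, \gamma \in \kx$, and let $z_1, \ldots, z_d$ and $a_1, \ldots, a_d$ be the entries in position $w$ of Figures \ref{indetsZAw} and \ref{polyprimegens}, respectively. A case-by-case inspection shows that the two figures line up in a uniform pattern: each $a_i$ has the form $e_i - \lambda_i f_i$ with $e_i, f_i \in E_w$ and $\lambda_i \in \kx$ (one of $\alpha$, $\beta$, $\gamma$), where $z_i = e_i f_i^{-1}$ is the corresponding indeterminate of $Z(A_w)$. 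Since each $f_i$ is invertible in $A_w$, we have $a_i A_w = (z_i - \lambda_i) A_w$, and hence the ideal $I := \langle a_1, \ldots, a_d \rangle \subseteq A/Q_w$ satisfies $I A_w = \langle z_1 - \lambda_1, \ldots, z_d - \lambda_d \rangle$, which is a maximal ideal of $A_w$ by Lemma \ref{lem4.2} and \S\ref{redloc}(3).

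Next I would invoke Lemma \ref{lem5.3} to conclude that $I$ is prime, together with the fact, established in the course of that proof, that every generator of $E_w$ is regular modulo $I$. Consequently the localization map $(A/Q_w)/I \to A_w / I A_w$ is injective, which forces $I A_w \cap (A/Q_w) = I$. Under the bijection $\prim_w A \leftrightarrow \max A_w$ of \S\ref{redloc}(3), the maximal ideal $I A_w$ pulls back to $Q_w + I$, which is therefore a primitive ideal of $A$.

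For part (b), given $P \in \prim A$, the stratification of \S\ref{stratification} places $P$ in a unique stratum $\prim_w A$. Since $k$ is algebraically closed, the bijections recalled in \S\ref{indets} and \S\ref{redloc}(3), together with Lemma \ref{lem4.2}, show that $P/Q_w$ is the contraction of a maximal ideal $\langle z_1 - \mu_1, \ldots, z_d - \mu_d \rangle A_w$ for some $\mu_i \in \kx$. The correspondence $(\alpha, \beta, \gamma) \mapsto (\lambda_1, \ldots, \lambda_d)$ produced in part (a) is a bijection from $(\kx)^d$ onto itself---read off Figure \ref{polyprimegens}, it is essentially the identity---so choosing $\alpha, \beta, \gamma$ so that $\lambda_i = \mu_i$, part (a) yields $P/Q_w = I$, and part (b) follows.

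The main obstacle is a bookkeeping verification rather than a conceptual one: across all $36$ positions, one must check that each entry of Figure \ref{polyprimegens} really does factor in the form $e_i - \lambda_i f_i$ with $e_i f_i^{-1}$ equal to the corresponding entry of Figure \ref{indetsZAw}, so that the generator lifts a shifted central indeterminate. Everything else in the argument is packaging of results already secured in Lemmas \ref{lem4.2} and \ref{lem5.3} and the stratification framework of \S\ref{stratification}--\S\ref{primgens}.
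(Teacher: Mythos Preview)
Your proposal is correct and follows essentially the same route as the paper: identify each $a_i$ with $z_i - \alpha_i$ up to a unit of $A_w$, invoke Lemma~\ref{lem5.3} for primeness of $I = \langle a_1,\dots,a_d\rangle$, deduce $I = IA_w \cap (A/Q_w)$, and apply the stratification bijections of \S\ref{redloc}(3) and \S\ref{subsec5.1}. The paper's version is slightly leaner in that it obtains the contraction equality directly from primeness of $I$ (a prime ideal disjoint from an Ore set equals the contraction of its extension), whereas your appeal to regularity of the $E_w$-generators---which is not explicitly verified in every case of Lemma~\ref{lem5.3}---is valid but redundant once primeness is known.
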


\begin{proof} (a) Let $z_1,\dots,z_d$ be the elements of $A_w$ listed in position $w$ of Figure \ref{indetsZAw}, and write $\alpha_1= \alpha$, $\alpha_2= \beta$, \dots. 
Set $P= Q_w+ \langle a_1,\dots,a_d\rangle$, and observe that
$$(P/Q_w)A_w = (z_1-\alpha_1)A_w +\cdots+ (z_d-\alpha_d)A_w \,,$$
which is a prime ideal of $A_w$ by \S\ref{subsec5.1}(1). 
By Lemma 5.3, $P/Q_w$ is a prime ideal of $A/Q_w$, from which we conclude that $P/Q_w= (P/Q_w)A_w \cap (A/Q_w)$. Consequently, \S\ref{subsec5.1}(2) implies that  $P$ is primitive.

(b) If $P$ is a primitive ideal of $A$, then $P\in \prim_w A$ for some $w\in S_3\times S_3$. In view of \S\ref{subsec5.1}(3),
$$P/Q_w= \bigl( (z_1-\alpha_1)A_w +\cdots+ (z_l-\alpha_l)A_w \bigr) \cap (A/Q_w)$$
for some $\alpha_i\in\kx$, where $z_1,\dots,z_d$ are the elements of $A_w$ listed in position $w$ of Figure \ref{indetsZAw}. Set $\alpha=\alpha_1$, $\beta=\alpha_2$, \dots, and let $a_1,\dots,a_d$ be the elements listed in position $w$ of Figure {\rm\ref{polyprimegens}}. As shown in the proof of (a) above, $P/Q_w$ equals the ideal of $A/Q_w$ generated by the cosets of $a_1,\dots,a_d$, and therefore $P= Q_w+ \langle a_1,\dots,a_d\rangle$, as desired.
\end{proof}

Let $\pi: \OqGLth \rightarrow \OqSLth$ denote the canonical quotient map. Since the primitive ideals of $\OqSLth$ are precisely the ideals of the form $\pi(P)$ where $P$ is a primitive ideal of $\OqGLth$ containing $D_q-1$, generators for the primitive ideals of $\OqSLth$ can be immediately obtained from Theorem \ref{mainthm}, as follows.

\begin{sidewaysfigure} 
$$\begin{matrix}
 &&&321  &231  &312  &132  &213  &123 \\  \\
321  &&&[23|12]-\beta X_{13}  &[23|12]-\beta X_{13}  &X_{12}X_{23}-\beta X_{31}  &0  &0  &X_{22}[23|12]-\beta X_{31} \\
 &&&[12|23]-\gamma X_{31}  \\  \\
231  &&&X_{21}X_{32}-\beta X_{13}  &[13|23]-\beta X_{21}  &0  &X_{11}X_{23}-\beta X_{32}  &X_{12}X_{33}-\beta X_{21}  &0  \\
 &&&&[12|13]-\gamma X_{32}  \\  \\
312  &&&[12|23]-\beta X_{31}  &0  &[23|13]-\beta X_{12}  &X_{11}X_{32}-\beta X_{23}  &X_{21}X_{33}-\beta X_{12}  &0  \\
 &&&&&[13|12]-\gamma X_{23}  \\  \\
 &&&&&&X_{11}-\alpha  &&X_{11}-\alpha  \\
132  &&&0  &X_{11}X_{23}-\beta X_{32}  &X_{11}X_{32}-\beta X_{23}  &[23|23]-\alpha^{-1}  &0  &X_{22}X_{33}-\alpha^{-1}   \\
 &&&&&&X_{23}-\gamma X_{32}  \\  \\
 &&&&&&&X_{33}-\alpha  &X_{11}X_{22}-\alpha  \\
213  &&&0  &X_{12}X_{33}-\beta X_{21}  &X_{21}X_{33}-\beta X_{12}  &0  &[12|12]-\alpha^{-1}   &X_{33}-\alpha^{-1}   \\
 &&&&&&&X_{12}-\gamma X_{21}  \\  \\
 &&&&&&X_{11}-\alpha  &X_{11}X_{22}-\alpha  &X_{11}-\alpha  \\
123  &&&X_{22}[12|23]-\beta X_{13}  &0  &0  &X_{22}X_{33}-\alpha^{-1}   &X_{33}-\alpha^{-1}   &X_{22}-\beta   \\
 &&&&&&&&X_{33}-\alpha^{-1}\beta^{-1}
 \end{matrix}$$
\caption {Generators for primitive ideals in factor algebras $\OqSLth/Q_w$}   \label{OqSL3privgens}
\end{sidewaysfigure}

\begin{corollary}  \label{maincor} Let $B= \OqSLth$, with $k$ algebraically closed and $q$ not a root of unity.

{\rm (a)} Let $w\in S_3\times S_3$, and let $a_1,\dots,a_d$ be the elements listed in position $w$ of Figure {\rm\ref{OqSL3privgens}} {\rm(}for some choices of $\alpha,\beta,\gamma \in \kx${\rm)}, now viewed as elements of $B$. Moreover, view $Q_w$ as an ideal of $B$ {\rm(}as defined in {\rm\S\ref{subsec2.2})}. Then $Q_w+ \langle a_1,\dots,a_d\rangle$ is a primitive ideal of $B$.

{\rm (b)} The ideals described in {\rm(a)} constitute all the primitive ideals of $B$.
\qed\end{corollary}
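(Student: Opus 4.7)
The plan is to exploit the bijection mentioned in the paragraph preceding the corollary: primitive ideals of $B = \OqSLth$ correspond, via $\pi$, to primitive ideals $P$ of $A = \OqGLth$ satisfying $D_q - 1 \in P$. Starting from Theorem \ref{mainthm}, I would cut out those parameters $(\alpha,\beta,\gamma)$ for which the primitive ideal $P_w(\alpha,\beta,\gamma) := Q_w + \langle a_1, \dots, a_d\rangle$ of $A$ contains $D_q - 1$, then push $P_w(\alpha,\beta,\gamma)$ forward to $B$ by replacing each $a_i$ by $\pi(a_i)$ and $Q_w$ by $\pi(Q_w)$.

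The first substep is, for each $w \in S_3 \times S_3$, to express $D_q$ modulo $Q_w$ in terms of the generators of $E_w^{\pm}$ from Figure \ref{Oregens}. A Laplace expansion (E1.3a) along a row or column compatible with the generating set of $Q_w$ (as given in Figure \ref{Hprimegens}) yields three patterns. Pattern 1: $D_q$ is itself one of the central elements listed in position $w$ of Figure \ref{indetsZAw}, so $D_q - \alpha$ is literally the first $a_i$ of Theorem \ref{mainthm}. Pattern 2: $D_q$ factors modulo $Q_w$ as a product of two generators of $E_w^{\pm}$ that correspond to two of the $a_i$'s; for instance, using $X_{13},X_{23} \in Q_{213,213}$ and expanding along the first row gives $D_q \equiv [12|12] X_{33}$ modulo $Q_{213,213}$, and similarly $D_q \equiv X_{11}[23|23]$ modulo $Q_{132,132}$. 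Pattern 3: only in the case $w = (123,123)$, we obtain $D_q \equiv X_{11}X_{22}X_{33}$ modulo $Q_w$, a product of three $a_i$-generators.

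The second substep translates $D_q \equiv 1$ into a parameter constraint for each pattern: $\alpha = 1$ in Pattern 1, $\beta = \alpha^{-1}$ in Pattern 2, and $\gamma = \alpha^{-1}\beta^{-1}$ in Pattern 3. In Pattern 1, the generator $D_q - \alpha$ becomes $D_q - 1$, which vanishes in $B$; this explains the $0$ entries in Figure \ref{OqSL3privgens}, and the remaining generators push forward to $B$ unchanged. In Patterns 2 and 3, the generators push forward with the stated substitutions, reproducing exactly the corresponding cells of Figure \ref{OqSL3privgens}. Assertions (a) and (b) of the corollary then follow immediately from the analogous assertions of Theorem \ref{mainthm} via the primitive-ideal bijection.

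The main obstacle is not conceptual but a finite bookkeeping exercise: one must verify, in each of the $36$ cells of Figure \ref{Hprimegens}, the Laplace-type congruence for $D_q$ modulo $Q_w$ and then check that the induced parameter substitution matches Figure \ref{OqSL3privgens}. No new algebraic ingredient beyond (E1.3a), the generator lists in Figures \ref{Hprimegens} and \ref{Oregens}, Proposition \ref{prop2.4}, and Theorem \ref{mainthm} is required.
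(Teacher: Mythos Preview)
Your proposal is correct and follows exactly the route the paper takes: the paper states the corollary with a bare \qed, relying on the sentence immediately preceding it (primitive ideals of $\OqSLth$ are precisely the images $\pi(P)$ of primitive ideals $P\supseteq \langle D_q-1\rangle$ of $\OqGLth$) together with Theorem~\ref{mainthm}. Your three ``patterns'' for how $D_q$ factors modulo $Q_w$ and the resulting parameter constraints ($\alpha=1$, $\beta=\alpha^{-1}$, or $\gamma=\alpha^{-1}\beta^{-1}$) are exactly the bookkeeping that produces Figure~\ref{OqSL3privgens} from Figure~\ref{polyprimegens}, which the paper leaves implicit.
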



\section{General consequences} \label{consequences}

\begin{theorem} \label{thmXX} All primitive factor algebras of $\OqGLth$ and $\OqSLth$ are Auslander-Gorenstein and GK-Cohen-Macaulay {\rm(}assuming $k$ algebraically closed and $q$ not a root of unity{\rm)}.
\end{theorem}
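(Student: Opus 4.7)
The plan is to reduce the primitive case to the $H$-prime case already settled in Theorem \ref{homolmodHprimes}, using the explicit generating sets obtained in Theorem \ref{mainthm}. First, I would fix a primitive ideal $P$ of $A = \OqGLth$. By Theorem \ref{mainthm}(b), there exist $w \in S_3 \times S_3$ and scalars $\alpha,\beta,\gamma \in \kx$ such that
$$P = Q_w + \langle a_1,\dots,a_d\rangle,$$
where $a_1,\dots,a_d$ are the elements in position $w$ of Figure \ref{polyprimegens}. Thus $A/P$ is the successive quotient of $A/Q_w$ by the cosets of $a_1,\dots,a_d$. By Corollary \ref{cor5.4}, the sequence $a_1,\dots,a_d$ is a normal regular sequence in $A/Q_w$, that is, a polynormal regular sequence.

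Next, I would invoke Theorem \ref{homolmodHprimes}(a) to supply the base case: $A/Q_w$ is itself Auslander-Gorenstein and GK-Cohen-Macaulay. The final step is to apply Theorem \ref{modnormal}, which guarantees that passing from an Auslander-Gorenstein, GK-Cohen-Macaulay algebra to its quotient by a normal nonzerodivisor preserves both properties. Iterating this once per element of the sequence $a_1,\dots,a_d$, one inductively obtains the Auslander-Gorenstein and GK-Cohen-Macaulay conditions on each intermediate factor $(A/Q_w)/\langle a_1,\dots,a_i\rangle$, and in particular on $A/P$.

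For $\OqSLth$, I would run the parallel argument: Corollary \ref{maincor}(b) presents every primitive ideal in the form $Q_w + \langle a_1,\dots,a_d\rangle$ with $a_1,\dots,a_d$ taken from Figure \ref{OqSL3privgens}, the $\OqSLth$-analogue of Corollary \ref{cor5.4} supplies the polynormal regular sequence property, and Theorem \ref{homolmodHprimes}(b) provides the corresponding base case. The only real obstacle is essentially bookkeeping: one must be sure that the $a_i$ actually form a \emph{polynormal regular} sequence in $A/Q_w$, i.e.\ that each $a_i$ is normal and regular modulo the ideal generated by its predecessors, but this is precisely the content of Corollary \ref{cor5.4}. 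Once that is in hand, the theorem follows by iterated application of Theorem \ref{modnormal}.
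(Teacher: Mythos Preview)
Your argument for $\OqGLth$ is correct and matches the paper's approach in substance: both combine the polynormal regular sequence structure (from \S\ref{subsec2.3}(4) for $Q_w$ and Corollary \ref{cor5.4} for $P/Q_w$) with iterated application of Theorem \ref{modnormal}. The paper phrases this slightly differently, concatenating the two sequences into a single polynormal regular sequence for $P$ in $A$ itself and then descending directly from $A$ (which is Auslander-regular and GK-Cohen-Macaulay by \cite[Proposition I.9.12]{BG}), rather than passing through $A/Q_w$ via Theorem \ref{homolmodHprimes}; but this is a cosmetic distinction.

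For $\OqSLth$ the paper takes a shortcut you might prefer: rather than running a parallel argument (which would require an $\OqSLth$-analogue of Corollary \ref{cor5.4} that is not stated explicitly), it simply observes that every primitive factor algebra of $\OqSLth$ is already a primitive factor algebra of $\OqGLth$, since $\OqSLth = \OqGLth/\langle D_q-1\rangle$. The $\OqSLth$ case is then an immediate consequence of the $\OqGLth$ case, with no additional bookkeeping needed.
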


\begin{proof} Let $P$ be an arbitrary primitive ideal of $\OqGLth$, and let $w\in S_3\times S_3$ such that $P\in \prim_w \OqGLth$. By \S\ref{subsec2.3}(4), Theorem \ref{mainthm}, and Corollary \ref{cor5.4}, $Q_w$ and $P/Q_w$ both have polynormal regular sequences of generators. It follows that $P$ has a polynormal regular sequence of generators. Since $\OqGLth$ is Auslander-regular and GK-Cohen-Macaulay (e.g., \cite[Proposition I.9.12]{BG}), we conclude from Theorem \ref{modnormal} that $\OqGLth/P$ is Auslander-Gorenstein and GK-Cohen-Macaulay.

The remaining statement is immediate from the fact that every primitive factor algebra of $\OqSLth$ is also a primitive factor algebra of $\OqGLth$.
\end{proof}

By inspection, each primitive ideal of $\OqGLth$ is contained in one of the primitive ideals in $\prim_{123,123} \OqGLth$. Since maximal ideals are primitive, we find that the only maximal ideals of $\OqGLth$ are those in $\prim_{123,123} \OqGLth$, and similarly in $\OqSLth$. This establishes the following result, answering two cases of a question raised in \cite[Introduction]{GZ}.

\begin{theorem} \label{thmYY} Every maximal ideal of $\OqGLth$ and $\OqSLth$ has codimension $1$ {\rm(}assuming $k$ algebraically closed and $q$ not a root of unity{\rm)}.
\end{theorem}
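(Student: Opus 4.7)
The plan is to identify all maximal ideals of $A:=\OqGLth$ explicitly, show that they all lie in the single stratum $\prim_{123,123} A$, and verify that each has codimension~$1$; the $\OqSLth$ statement will then follow by pulling back through the quotient map $\pi$.

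First, I would observe from Figure~\ref{Hprimegens} that $Q_{123,123}$ is generated by the six off-diagonal canonical generators $X_{ij}$ with $i\ne j$, so that
$$A/Q_{123,123}\ \cong\ k[X_{11}^{\pm1},X_{22}^{\pm1},X_{33}^{\pm1}]$$
is a commutative Laurent polynomial ring in three variables---the diagonal $X_{ii}$ become invertible in the quotient because $D_q\equiv X_{11}X_{22}X_{33}$ modulo $Q_{123,123}$ while $D_q$ is a unit in $A$. Since $k$ is algebraically closed, the maximal ideals of this quotient are precisely the $\langle X_{11}-\alpha,\,X_{22}-\beta,\,X_{33}-\gamma\rangle$ with $(\alpha,\beta,\gamma)\in(\kx)^3$; their preimages in $A$,
$$M_{\alpha,\beta,\gamma}\ :=\ Q_{123,123}+\langle X_{11}-\alpha,\,X_{22}-\beta,\,X_{33}-\gamma\rangle,$$
are (by Theorem~\ref{mainthm} together with the $(123,123)$ entry of Figure~\ref{polyprimegens}) exactly the elements of $\prim_{123,123} A$. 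Each satisfies $A/M_{\alpha,\beta,\gamma}\cong k$, so each has codimension~$1$.

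The heart of the argument is to show that every primitive ideal $P$ of $A$ is contained in some $M_{\alpha_0,\beta_0,\gamma_0}$; since the $H$-strata $\prim_w A$ are disjoint and maximal ideals are primitive, this will force every maximal ideal to lie in $\prim_{123,123} A$. Fix $w\in S_3\times S_3$ and write $P=Q_w+\langle a_1,\dots,a_d\rangle$ as in Theorem~\ref{mainthm}. I would first note, by inspection of Figure~\ref{Hprimegens}, that every generator of $Q_w$ is either an off-diagonal $X_{ij}$ or a $2\times 2$ quantum minor $[I|J]$ with $I\ne J$; expanding the latter via the formula in \S\ref{subsec1.2} writes it as a difference of products, each containing an off-diagonal factor. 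Hence $Q_w\subseteq Q_{123,123}$ for every $w$. Next, reading off Figure~\ref{polyprimegens} position by position shows that each $a_i$ reduces modulo $Q_{123,123}$ either to $0$ (whenever every monomial term of $a_i$ has an off-diagonal factor, as for $X_{12}X_{23}-\beta X_{31}$, $[13|23]-\beta X_{21}$, $X_{23}-\gamma X_{32}$, and so on) or to an element of the form $X_{11}^{e_1}X_{22}^{e_2}X_{33}^{e_3}-\lambda$ with $\lambda\in\kx$ (as for $D_q-\alpha$, $X_{ii}-\alpha$, $[ii|ii]-\alpha$, $X_{ii}X_{jj}-\alpha$). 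The resulting system of monomial equations in $(X_{11},X_{22},X_{33})\in(\kx)^3$ comprises at most three equations in three unknowns and is patently compatible at each of the $36$ positions, hence admits a solution $(\alpha_0,\beta_0,\gamma_0)\in(\kx)^3$; for such a choice, $P\subseteq M_{\alpha_0,\beta_0,\gamma_0}$.

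Putting the two halves together yields that the maximal ideals of $\OqGLth$ are exactly the $M_{\alpha,\beta,\gamma}$, each of codimension~$1$. For $\OqSLth$, Proposition~\ref{prop2.4} and the remarks following Theorem~\ref{mainthm} identify the maximal ideals of $\OqSLth$ with those maximal ideals of $\OqGLth$ that contain $D_q-1$---namely the $M_{\alpha,\beta,\gamma}$ with $\alpha\beta\gamma=1$---each of which remains codimension~$1$ in the quotient. The main obstacle is really just the tabulation in the middle paragraph: the case-by-case check that in every one of the $36$ positions of Figure~\ref{polyprimegens} the reduced monomial system is compatible. This is mechanical for any individual position, but requires a uniform scan across the figure.
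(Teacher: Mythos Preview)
Your proposal is correct and follows essentially the same approach as the paper: the paper's argument (given in the paragraph immediately preceding the theorem) is simply that ``by inspection, each primitive ideal of $\OqGLth$ is contained in one of the primitive ideals in $\prim_{123,123}\OqGLth$,'' and since maximal ideals are primitive, all maximal ideals lie in that stratum and hence have codimension~$1$. You have carried out that inspection explicitly---verifying $Q_w\subseteq Q_{123,123}$ for all $w$ and checking that the generators in Figure~\ref{polyprimegens} reduce modulo $Q_{123,123}$ to a solvable monomial system---which is exactly what the paper leaves implicit.
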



\section{Appendix. Homological conditions}

\begin{definition} A noetherian ring $R$ is \emph{Auslander-Gorenstein} provided
\begin{enumerate}
\item The  modules $R_R$ and $_RR$ both have finite injective dimension;
\item $R$ satisfies the \emph{Auslander condition}: $\calExt^i_R(N,R)=0$ for all $R$-submodules $N$ of $\calExt^j_R(M,R)$ whenever $0\le i<j$ and $M$ is a finitely generated (right or left) $R$-module.
\end{enumerate}
If condition (1) is strengthened to `$\gldim R<\infty$', then $R$ is \emph{Auslander-regular}.

The \emph{grade} (or \emph{$j$-number}) of a finitely generated $R$-module $M$ is
$$j(M)= j_R(M) := \inf \{ j\ge0 \mid \calExt^j_R(M,R) \ne 0 \}.$$

Now assume that $R$ is an affine $k$-algebra. Then $R$ is \emph{GK-Cohen-Macaulay} provided $\GKdim(R)<\infty$ and
$$j(M)+ \GKdim(M)= \GKdim(R)$$
for every nonzero finitely generated (right or left) $R$-module $M$. 
\end{definition}

\begin{theorem} \label{modnormal}
Let $R$ be a noetherian ring, and let $\Omega\in R$ be a regular normal element.

{\rm (a)} If $R$ is Auslander-Gorenstein, then so is $R/\Omega R$.

{\rm (b)} Assume that $R$ is an affine $k$-algebra. If $R$ is GK-Cohen-Macaulay, then so is $R/\Omega R$.
\end{theorem}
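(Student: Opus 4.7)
\medskip

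The plan is to reduce both parts to a single change-of-rings computation, which I will refer to as a Rees-type isomorphism: for every finitely generated $R/\Omega R$-module $M$,
\begin{equation*}
\calExt^i_R(M,R) \cong \calExt^{i-1}_{R/\Omega R}(M,R/\Omega R) \quad (i\ge 1), \qquad \calExt^0_R(M,R)=0.
\end{equation*}
The first task is to establish this. Since $\Omega$ is regular and normal, say with $\Omega R = R\Omega$ and an associated automorphism $\sigma$ satisfying $\Omega r = \sigma(r)\Omega$, the map $r\mapsto \Omega r$ is a right $R$-module homomorphism, yielding a free resolution $0\to R\to R\to R/\Omega R\to 0$ of $R/\Omega R_R$. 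Applying $\Hom_R(-,R)$ and identifying $\Hom_R(R,R)=R$, the connecting map becomes right multiplication by $\Omega$, which is again injective because $\Omega$ is regular. Hence $\calExt^0_R(R/\Omega R,R)=0$, $\calExt^1_R(R/\Omega R,R)\cong R/\Omega R$, and all higher Exts vanish. Feeding this into the change-of-rings spectral sequence
$$E_2^{p,q} = \calExt^p_{R/\Omega R}\bigl(M, \calExt^q_R(R/\Omega R, R)\bigr) \Rightarrow \calExt^{p+q}_R(M,R)$$
the $q\ne 1$ rows vanish, the spectral sequence collapses on the line $q=1$, and the claimed isomorphism drops out. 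The same argument on the other side handles left modules.

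For part (a), the Rees isomorphism transfers both defining properties. If $\operatorname{injdim} R_R = d <\infty$, then for any finitely generated $R/\Omega R$-module $M$ and any $i\ge d$ we have $\calExt^i_{R/\Omega R}(M,R/\Omega R)\cong \calExt^{i+1}_R(M,R)=0$, so $\operatorname{injdim}(R/\Omega R)_{R/\Omega R}\le d-1$; symmetrically on the left. For the Auslander condition, take a finitely generated $R/\Omega R$-module $M$ and a submodule $N\subseteq \calExt^j_{R/\Omega R}(M,R/\Omega R)\cong \calExt^{j+1}_R(M,R)$. The Auslander condition for $R$, applied with index $j+1$, gives $\calExt^{i}_R(N,R)=0$ for $0\le i<j+1$. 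Setting $i=i'+1$ with $0\le i'<j$ and using the Rees isomorphism again, $\calExt^{i'}_{R/\Omega R}(N,R/\Omega R)=0$, which is exactly the Auslander condition for $R/\Omega R$.

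For part (b), I first pin down $\GKdim(R/\Omega R)$. From the computation above, $j_R(R/\Omega R)=1$, so GK-Cohen-Macaulay for $R$ yields $\GKdim(R/\Omega R) = \GKdim R - 1$. For an arbitrary nonzero finitely generated $R/\Omega R$-module $M$, the GK-dimension is intrinsic: viewing a finite $R$-generating set as an $R/\Omega R$-generating set and noting that the images in $R/\Omega R$ of a finite-dimensional generating subspace of $R$ still generate $R/\Omega R$, we have $\GKdim_R(M)=\GKdim_{R/\Omega R}(M)$. Combining the Rees isomorphism $j_R(M)=j_{R/\Omega R}(M)+1$ with the CM identity for $R$,
$$j_{R/\Omega R}(M) + 1 + \GKdim_{R/\Omega R}(M) \;=\; j_R(M)+\GKdim_R(M) \;=\; \GKdim R \;=\; \GKdim(R/\Omega R)+1,$$
which collapses to the CM identity for $R/\Omega R$.

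The main obstacle is purely bookkeeping around the normality of $\Omega$: one must verify that $\mu_\Omega:r\mapsto \Omega r$ really is a homomorphism of right modules (so that it legitimately produces a resolution of $R/\Omega R$) and, symmetrically, that $r\mapsto r\Omega$ works on the left side, so that both the injective-dimension and Auslander statements pass across the two-sided quotient $R/\Omega R$. Once the Rees isomorphism is in place in both variances, everything else is a short diagram chase.
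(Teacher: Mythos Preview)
Your argument is correct. The paper's own proof is much terser: for part~(a) and for the grade-shift identity $j_{R/\Omega R}(M)=j_R(M)-1$ it simply cites Levasseur \cite[\S3.4, Remark 3]{Lev}, and then for part~(b) it carries out exactly the same arithmetic you do (apply GK-Cohen-Macaulay for $R$, specialize to $M=R/\Omega R$ to get $\GKdim(R/\Omega R)=\GKdim(R)-1$, and substitute back). What you have done is unpack that citation: your Rees-type isomorphism via the change-of-rings spectral sequence is precisely the content behind Levasseur's remark, and from it both the Auslander-Gorenstein transfer and the grade shift fall out. So the approaches are the same in substance; yours is self-contained, while the paper outsources the homological input. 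One minor point worth making explicit (you gesture at it in your last paragraph): the Rees isomorphism may involve a twist by the automorphism $\sigma$ associated to $\Omega$, but since twisting by an automorphism preserves vanishing of $\calExt$ groups, this does not affect grades, injective dimension, or the Auslander condition, so the argument goes through unchanged.
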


\begin{proof} (a) \cite[\S3.4, Remark 3]{Lev}.

(b) By \cite[\S3.4, Remark 3]{Lev}, $j_{R/\Omega R}(M) = j_R(M)-1$
for any nonzero finitely generated $(R/\Omega R)$-module $M$. Since $R$ is GK-Cohen-Macaulay, we get
\begin{equation}
j_{R/\Omega R}(M)+1+\GKdim(M)= \GKdim(R).  \tag{E7.2}
\end{equation}
The case $M=R/\Omega R$ of (E7.2) implies that $1+\GKdim(R/\Omega R)= \GKdim(R)$, and hence (E7.2) can be rewritten as
$$j_{R/\Omega R}(M)+\GKdim(M)= \GKdim(R/\Omega R).$$
This shows that $R/\Omega R$ is GK-Cohen-Macaulay.
\end{proof} 


\end{document}